\newtheorem{dfn}{Definition}
\newtheorem{lem}{Lemma}
\newtheorem{obs}{Observation}
\newtheorem{thm}{Theorem}
\newtheorem{pro}{Proposition}
\newtheorem{cor}{Corollary}
\newtheorem{hyp}{Hypothesis}
\journal{}
\begin{document}

\begin{frontmatter}

\title{Edge and mixed metric dimension of Johnson graphs}

\author{Jozef Kratica \fnref{mi}}
\ead{jkratica@mi.sanu.ac.rs}
\author{Mirjana \v{C}angalovi\'c \fnref{fon}}
\ead{mirjana.cangalovic@alumni.fon.bg.ac.rs}
\author{Vera Kova\v{c}evi\'c-Vuj\v{c}i\'c \fnref{fon}}
\ead{vera.vujcic@alumni.fon.bg.ac.rs}
\author{Milica Milivojevi\'c Danas\fnref{kg}}
\ead{milica.milivojevic@kg.ac.rs}

 \address[mi]{Mathematical Institute, Serbian Academy of Sciences and Arts, Kneza Mihaila 36/III, 11 000 Belgrade, Serbia}

 \address[fon]{Faculty of Organizational Sciences, University of  Belgrade, Jove Ili\'ca 154, 11000 Belgrade, Serbia}

 \address[kg]{Faculty of Engineering, University of Kragujevac, Sestre Janji\' c 6, Kragujevac, Serbia }

\begin{abstract}
In this paper, both edge and mixed metric dimensions of Johnson graphs $J_{n,k}$ are considered.
A new tight lower bound for $\beta_E(J_{n,k})$ based on hitting sets has been obtained.
Using this bound, exact values for $\beta_E(J_{n,2})$ and $\beta_M(J_{n,2})$ have
been derived, and it is proved that $\beta_E(J_{n,2}) = \beta_M(J_{n,2})$.
\end{abstract}

\begin{keyword}
Johnson graphs \sep edge metric dimension \sep mixed metric dimension
\end{keyword}

\end{frontmatter}


\section{Introduction}

Resolving sets and the metric dimension were introduced to the graph theory by Slater (1975) in \cite{metd1}
and independently by Harary and Melter (1976) in \cite{metd2}. For a simple connected undirected graph $G = (V,E)$ the distance between vertices $u$ and $v$ is denoted by $d(u,v)$. Vertex $w$ of graph $G$ is said to resolve two vertices
$u,v \in V(G)$ if $d(u,w) \ne d(v,w)$. Set $S \subseteq V(G)$ is called a resolving set of $G$ if
any pair of distinct vertices of $G$ is resolved by some vertex from $S$. The metric dimension of $G$,
denoted by $\beta(G)$, is the minimal cardinality of its resolving sets, while the metric basis is
any resolving set with cardinality $\beta(G)$. The metric dimension has applications in several diverse areas:
the robot navigation \cite{metd3}, network discovery and verification \cite{ber06}, geographical routing protocols \cite{liu06},
computer graphic \cite{appl4} and chemistry \cite{appl5,appl6}, etc.

Since the problem of computing the metric dimension of a graph is NP-hard in a general case (see \cite{metd3}),
many papers in the literature have been devoted to finding its exact value or good bounds for some
classes of graphs. In particular, the metric dimension and resolving sets of Johnson and Kneser graphs
have been studied in \cite{metdj1,metdj}. Moreover,
in \cite{metdj} some interesting combinatorial objects were used to obtain resolving sets of these graphs.
For Johnson graphs these objects are projective planes and symmetric designs,
while for Kneser graphs partial geometries, Hadamard matrices, Steiner
systems and toroidal grids were used.

However, recently, several authors have turned their attention in the opposite direction from resolvability.
One such graph invariant is equidistant dimension, introduced by Gonz{\'a}lez et al. (2022) in \cite{eqdim1},
which is also NP-hard in a general case (see \cite{eqdimnp}). Set $S \subseteq V(G)$ is called a
distance-equalizer set of $G$ if any pair of distinct vertices $x$ and $y$ from $V(G) \setminus S$
there exists a vertex $w \in S$ at same distance from $x$ and $y$.

The edge metric dimension invariant was recently introduced by Kelenc et al. (2018) in \cite{emetd1} in order to extend
the concept of resolving to edges. In the sequel, edge $e \in E(G)$ will be denoted by $e = uv$,
where $u,v \in V(G)$ are its endpoints. The distance between edge $e=uv$ and vertex $w$ is defined as
$d(e,w) = min \{d(u,w), d(v,w)\}$. Two edges $e_1$ and $e_2$ are resolved by vertex $w$ if
$d(e_1,w) \ne d(e_2,w)$. Set $N \subseteq V(G)$ is called an edge resolving set of $G$ if
any pair of distinct edges of $G$ is resolved by some vertex from $N$. The edge metric dimension of $G$,
denoted by $\beta_E(G)$, is the minimal cardinality of its edge resolving sets, while the edge metric basis is
any edge resolving set with cardinality $\beta_E(G)$.

The next step was unification of vertices and edges within an extended resolving concept,
which was made by Kelenc et al. (2017) in \cite{mmetd1}.
They introduced the concept of the mixed metric dimension and studied its theoretical and practical properties.
Two items $a, b \in V(G) \cup E(G)$ are resolved by vertex $w$ if
$d(a,w) \ne d(b,w)$. Set $M \subseteq V(G)$ is called a mixed resolving set of $G$ if
any pair of distinct items from $V(G) \cup E(G)$ is resolved by some vertex from $M$. The mixed metric dimension of $G$,
denoted by $\beta_M(G)$, is the minimal cardinality of its mixed resolving sets, while the mixed metric basis is
any mixed resolving set with cardinality $\beta_M(G)$. In general case finding $\beta_E(G)$ and $\beta_M(G)$ is NP-hard
\cite{emetd1,mmetd1}, respectively.

An efficient variable neighbourhood search (VNS) metaheuristic approach to metric dimension problem is
presented in \cite{vns}. The same approach with slight modifications can be used for
obtaining upper bound of edge and mixed metric dimension problems.

Next, we will present several lower bounds of edge and mixed metric dimension from the literature.

Let:
\begin{itemize}
\item $deg_v$ is degree of vertex $v$;
\item $\delta(G)$ is minimal vertex degree in graph, i.e. $\delta(G) = min \{deg_v | v \in V(G)\}$;
\item $\Delta(G)$ is maximal vertex degree in graph, i.e. $\Delta(G) = max \{deg_v | v \in V(G)\}$;
\item $Diam(G)$ is diameter of graph, i.e. $Diam(G) = max \{d(u,v) | u,v \in V(G) \}$.
\end{itemize}

\begin{pro}\mbox{\rm(\cite{mmetd1,mmetd2})} \label{elb1} Let $G$ be a connected graph. Then \begin{equation}\label{Y04}\beta_E(G)\geq \lceil\log_2\triangle (G)\rceil.\end{equation}
\end{pro}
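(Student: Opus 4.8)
The plan is to exploit the many edges incident to a single vertex of maximum degree and to show that, with respect to any edge resolving set, their distance vectors can take only a limited number of values. First I would fix a vertex $v$ with $deg_v = \Delta(G)$ and enumerate its incident edges as $e_1 = vu_1, \dots, e_\Delta = vu_\Delta$, where $u_1, \dots, u_\Delta$ are the neighbours of $v$. Let $N$ be an arbitrary edge resolving set of $G$, and for each edge $e_i$ consider its distance vector $\left(d(e_i, w)\right)_{w \in N}$.

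The key step is to observe that, for each fixed $w \in N$, the coordinate $d(e_i, w)$ can take only two distinct values as $i$ varies. Since $u_i$ is adjacent to $v$, the triangle inequality gives $|d(u_i, w) - d(v, w)| \le 1$, so $d(u_i, w) \in \{d(v,w) - 1,\, d(v,w),\, d(v,w) + 1\}$. By definition $d(e_i, w) = \min\{d(v, w), d(u_i, w)\}$, and a short case analysis on the value of $d(u_i, w)$ shows that in every case $d(e_i, w) \in \{d(v,w) - 1,\, d(v, w)\}$. Thus each coordinate of the distance vector of $e_i$ with respect to $N$ is confined to (at most) two possibilities, both determined by the fixed quantity $d(v, w)$.

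Next I would apply a counting (pigeonhole) argument. Because $N$ is an edge resolving set, the $\Delta(G)$ edges $e_1, \dots, e_\Delta$ must have pairwise distinct distance vectors indexed by $N$. By the previous step each such vector lies in a set of cardinality at most $2^{|N|}$. Hence $\Delta(G) \le 2^{|N|}$, which yields $|N| \ge \log_2 \Delta(G)$ and, since $|N|$ is an integer, $|N| \ge \lceil \log_2 \Delta(G) \rceil$. Taking the minimum over all edge resolving sets $N$ gives $\beta_E(G) \ge \lceil \log_2 \Delta(G) \rceil$, as claimed.

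I expect the only point requiring genuine care to be the case analysis establishing the two-valuedness of $d(e_i, w)$; the remainder is a routine counting step. It is worth noting the degenerate case $w = v$, where $d(v, w) = 0$ forces $d(e_i, w) = 0$ for all $i$, so the corresponding coordinate is simply constant, which is still consistent with the two-value bound and does not affect the count.
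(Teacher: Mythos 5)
Your proof is correct: the key observation that for every $w\in N$ each incident edge satisfies $d(e_i,w)\in\{d(v,w)-1,\,d(v,w)\}$, combined with the pigeonhole count $\Delta(G)\le 2^{|N|}$, is exactly the standard argument, and your handling of the degenerate case $w=v$ is fine. The paper itself states this proposition without proof (citing the references where it originates), and your argument coincides with the proof given there, so there is nothing further to compare.
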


\begin{thm}\label{elb2} \mbox{\rm(\cite{fil19})} Let $G$ be a connected graph, then \begin{equation}\label{Y02}\beta_E(G) \geq 1 + \lceil \log_2 \delta(G) \rceil. \end{equation}\end{thm}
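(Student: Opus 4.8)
The plan is to sharpen the counting behind Proposition \ref{elb1} by exploiting a vertex that lies inside the edge resolving set itself. Let $N$ be an edge metric basis of $G$, so that $|N| = \beta_E(G)$; write $k = |N|$. Since $G$ is connected and has at least one edge, $N \neq \emptyset$, so I may fix an arbitrary vertex $w \in N$ and focus on the $deg_w$ edges incident to $w$. The crucial observation is that the coordinate corresponding to $w$ is useless for separating these particular edges: if $e = wz$ is incident to $w$, then $d(e,w) = \min\{d(w,w), d(z,w)\} = 0$, so all $deg_w$ incident edges receive the same value $0$ in the $w$-coordinate of their distance vectors.

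Next I would show that each of the remaining $k-1$ coordinates can contribute only a single bit of information about these edges. Fix any $w' \in N \setminus \{w\}$ and an incident edge $e = wz$. Because $z$ is adjacent to $w$, the triangle inequality gives $d(z,w') \in \{d(w,w')-1,\, d(w,w'),\, d(w,w')+1\}$, whence
\[
d(e,w') \;=\; \min\{d(w,w'),\, d(z,w')\} \;\in\; \{\, d(w,w')-1,\; d(w,w') \,\}.
\]
Thus, across the $k-1$ coordinates indexed by $N \setminus \{w\}$, the distance vector of each edge incident to $w$ is a binary string relative to the fixed reference values $d(w,w')$, so there are at most $2^{k-1}$ possible vectors.

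Since $N$ resolves all edges, the $deg_w$ edges incident to $w$ must receive pairwise distinct vectors; combined with the two preceding steps this forces $deg_w \le 2^{k-1}$. Finally, because $w$ is a vertex of $G$ we have $deg_w \ge \delta(G)$, so $\delta(G) \le 2^{\,\beta_E(G)-1}$, and taking base-$2$ logarithms and using the integrality of $\beta_E(G)$ yields $\beta_E(G) \ge 1 + \lceil \log_2 \delta(G)\rceil$, as claimed.

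I expect the main obstacle to be conceptual rather than computational: the two-valued (binary) estimate on $d(e,w')$ is identical to the one underlying Proposition \ref{elb1}, so the real content is noticing that a vertex \emph{inside} $N$ sacrifices its own coordinate on its incident edges. It is this wasted coordinate that supplies the extra additive constant and upgrades $\lceil \log_2 \delta(G)\rceil$ to $1 + \lceil \log_2 \delta(G)\rceil$. One should also confirm the degenerate bookkeeping (a connected graph with an edge has $\delta(G) \ge 1$ and $N \neq \emptyset$) so that the logarithm is well defined.
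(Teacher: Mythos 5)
Your proof is correct, and it is essentially the argument of the cited source \cite{fil19} (the present paper quotes Theorem \ref{elb2} without proof): the wasted zero coordinate at $w \in N$ together with the two-valued estimate $d(e,w') \in \{d(w,w')-1,\, d(w,w')\}$ on the remaining $k-1$ coordinates yields $\delta(G) \le deg_w \le 2^{k-1}$, hence the bound. This coordinate-sacrifice idea is also exactly the technique behind the mixed-dimension bound of Theorem \ref{mlb2}, where the vertex $x \in S$ and its incident edges give $1+deg_x$ items sharing a zero coordinate.
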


\begin{pro} \mbox{\rm(\cite{mmetd1,mmetd2})} \label{mlb1} For any graph $G$ it holds
	\begin{equation}\label{Y01}\beta_{M}(G)\geq \max\{\beta(G),\beta_{E}(G)\}. \end{equation}
\end{pro}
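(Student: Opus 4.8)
The plan is to exploit the fact that the mixed resolving condition is, by its very definition, a strictly stronger requirement than either the vertex resolving condition or the edge resolving condition taken separately. A mixed resolving set must distinguish every pair of distinct items drawn from $V(G) \cup E(G)$, and this collection of pairs splits into three families: vertex--vertex pairs, edge--edge pairs, and mixed vertex--edge pairs. The first family is exactly what a resolving set must handle, and the second is exactly what an edge resolving set must handle. So the entire argument reduces to reading off two containments from the definitions.

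First I would fix a mixed metric basis $M$ of $G$, so that $|M| = \beta_M(G)$ and every pair of distinct items in $V(G) \cup E(G)$ is resolved by some vertex of $M$. Since $V(G) \subseteq V(G) \cup E(G)$, in particular every pair of distinct \emph{vertices} $u, v$ is resolved by some $w \in M$, meaning $d(u,w) \neq d(v,w)$. This is precisely the statement that $M$ is a resolving set of $G$, and therefore
\begin{equation*}
\beta(G) \leq |M| = \beta_M(G).
\end{equation*}

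Next I would run the identical argument on edges. Because $E(G) \subseteq V(G) \cup E(G)$, every pair of distinct \emph{edges} $e_1, e_2$ is resolved by some $w \in M$, i.e.\ $d(e_1,w) \neq d(e_2,w)$. Hence $M$ is an edge resolving set of $G$, which yields
\begin{equation*}
\beta_E(G) \leq |M| = \beta_M(G).
\end{equation*}
Combining the two displayed inequalities gives $\beta_M(G) \geq \max\{\beta(G), \beta_E(G)\}$, as claimed.

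There is no genuine obstacle here: the only thing to be careful about is the bookkeeping of definitions, namely recognizing that restricting a mixed resolving set to act only on vertex pairs (respectively edge pairs) leaves behind exactly a resolving set (respectively an edge resolving set). The mixed vertex--edge pairs play no role in the lower bound and are simply discarded. It is worth remarking that this inequality can be strict, since a set that resolves all vertex pairs and all edge pairs need not resolve every vertex--edge pair; thus the proof establishes a lower bound rather than an equality.
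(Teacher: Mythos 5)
Your proof is correct. The paper states this proposition without proof, citing it from the literature (\cite{mmetd1,mmetd2}), and your argument is exactly the standard definitional one behind the cited result: any mixed resolving set already resolves all vertex--vertex pairs (hence is a resolving set) and all edge--edge pairs (hence is an edge resolving set), so its cardinality bounds both $\beta(G)$ and $\beta_E(G)$ from above, giving $\beta_M(G)\geq \max\{\beta(G),\beta_E(G)\}$.
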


\begin{thm} \label{mlb2} \mbox{\rm(\cite{mil21,mil21a})} Let $G$ be a connected graph and let $x$ be an arbitrary vertex from mixed resolving set $S$ of  $G$. Then,
$|S| \geq 1 + \lceil \log_2 (1+\deg_x) \rceil$.\end{thm}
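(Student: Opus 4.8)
The plan is to exploit the vertex $x$ together with the $\deg_x$ edges incident to it, and to show that these $1+\deg_x$ items force $S$ to contain enough vertices besides $x$ to tell them apart. The whole argument reduces to a binary-counting estimate, so the work is in setting up the right encoding.

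First I would single out the relevant items: the vertex $x$ itself and the edges $e_1 = xv_1, \ldots, e_{\deg_x} = xv_{\deg_x}$, where $v_1, \ldots, v_{\deg_x}$ are the neighbours of $x$. The key initial observation is that every one of these $1 + \deg_x$ items lies at distance $0$ from $x$, since $d(x,x) = 0$ and $d(e_i, x) = \min\{d(x,x), d(v_i, x)\} = 0$. Consequently $x$ itself fails to resolve any pair among these items, so all the resolving must be carried out by the remaining $|S| - 1$ vertices of $S$.

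Next I would pin down how an incident edge behaves with respect to an arbitrary vertex $w \in S \setminus \{x\}$. Because $d(x, v_i) = 1$, the triangle inequality gives $d(v_i, w) \in \{d(x,w)-1, d(x,w), d(x,w)+1\}$, whence $d(e_i, w) = \min\{d(x,w), d(v_i,w)\} \in \{d(x,w)-1, d(x,w)\}$. In other words, relative to the distance $d(x,w)$, each incident edge is only ever $0$ or $1$ closer to $w$ than $x$ is. This dichotomy is the crucial structural fact, and it is exactly what makes a binary encoding possible. Armed with it, I would encode each of the $1+\deg_x$ items by a binary vector indexed by $S \setminus \{x\}$: assign to $x$ the all-zero vector, and to each edge $e_i$ the vector whose $w$-th entry is $b_i(w) = d(x,w) - d(e_i, w) \in \{0,1\}$. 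Since two of these items have differing distances to $w$ precisely when their encodings differ at $w$, the requirement that $S$ resolve every pair among the $1+\deg_x$ items translates verbatim into the requirement that these $1+\deg_x$ binary vectors be pairwise distinct. As there are only $2^{|S|-1}$ binary vectors of length $|S|-1$, we obtain $2^{|S|-1} \geq 1 + \deg_x$, i.e. $|S| - 1 \geq \lceil \log_2(1+\deg_x)\rceil$, which rearranges to the claimed inequality.

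I expect the main obstacle to be the second step, namely establishing that an incident edge can only be $0$ or $1$ closer to $w$ than $x$ is; everything after that is a clean counting argument. The one point requiring care is to remember that the vertex $x$ itself participates as the all-zero codeword, since $x$ must be resolved from each incident edge as well, and its inclusion is exactly what supplies the ``$+1$'' in $1 + \deg_x$.
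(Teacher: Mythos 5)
Your proof is correct and is essentially the standard argument for this result: the paper itself states the theorem without proof, citing \cite{mil21,mil21a}, and the proof given there is the same counting scheme --- the $1+\deg_x$ items at distance $0$ from $x$ (the vertex $x$ and its incident edges) cannot be separated by $x$, while for every $w \in S \setminus \{x\}$ each incident edge $e$ satisfies $d(e,w) \in \{d(x,w)-1, d(x,w)\}$, so the items are encoded by pairwise distinct binary vectors of length $|S|-1$, forcing $2^{|S|-1} \geq 1+\deg_x$. Your handling of the details (the offset encoding relative to $d(x,w)$, the integrality step yielding the ceiling, and the role of $x$ as the all-zero codeword supplying the ``$+1$'') is sound throughout.
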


\begin{cor} \label{mlb3} \mbox{\rm(\cite{mil21,mil21a})} Let $G$ be an $r$-regular graph, then \begin{equation}\beta_M(G) \geq 1 + \lceil \log_2 (r+1) \rceil.\end{equation}\end{cor}

\begin{thm}\label{mlb4} \mbox{\rm(\cite{mil21,mil21a})} Let $G=(V,E)$ be a connected graph with mixed metric dimension $\beta_M (G)$, then \begin{equation}\label{Y03}|V|+|E|\leq Diam(G)^{\beta_M (G)}+\beta_M (G)(\bigtriangleup (G)+1). \end{equation}
\end{thm}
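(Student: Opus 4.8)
The plan is to exploit the fact that a mixed metric basis assigns a distinct ``distance code'' to every vertex and every edge, and then to count how many such codes are possible. First I would fix a mixed metric basis $S = \{w_1, \dots, w_{\beta_M(G)}\}$ and, for each item $a \in V(G) \cup E(G)$, define its code $r(a \mid S) = (d(a,w_1), \dots, d(a,w_{\beta_M(G)}))$, where for an edge $e = uv$ we use $d(e,w_i) = \min\{d(u,w_i), d(v,w_i)\}$. Because $S$ is a mixed resolving set, any two distinct items are resolved by some $w_i$, so their codes differ in that coordinate; hence $a \mapsto r(a \mid S)$ is injective on $V(G) \cup E(G)$. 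Since every coordinate lies in $\{0,1,\dots,Diam(G)\}$, this already gives the crude bound $|V|+|E| \le (Diam(G)+1)^{\beta_M(G)}$, but the sharper claimed bound requires treating the zero entries separately.

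The key step is to split the items according to whether their code contains a zero. Let $A$ be the set of items whose code has all coordinates at least $1$, and let $B$ be the remaining items, i.e.\ those with $d(a,w_i)=0$ for some $i$. For items in $A$ each coordinate ranges over the $Diam(G)$ values $\{1,\dots,Diam(G)\}$, so injectivity yields $|A| \le Diam(G)^{\beta_M(G)}$. For items in $B$ I would argue that a zero coordinate forces the item to be incident to the basis: a vertex $v$ satisfies $d(v,w_i)=0$ only when $v=w_i$, so $B$ contains at most $\beta_M(G)$ vertices, while an edge $e$ satisfies $d(e,w_i)=0$ only when $w_i$ is an endpoint of $e$, so the edges in $B$ are exactly those incident to some $w_i$ and there are at most $\sum_{i} \deg_{w_i} \le \beta_M(G)\,\Delta(G)$ of them. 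Adding these gives $|B| \le \beta_M(G) + \beta_M(G)\,\Delta(G) = \beta_M(G)(\Delta(G)+1)$.

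Combining the two bounds gives $|V|+|E| = |A| + |B| \le Diam(G)^{\beta_M(G)} + \beta_M(G)(\Delta(G)+1)$, which is the desired inequality. I expect the only delicate point to be the bookkeeping for $B$: one must verify that no relevant item is missed and that the overcounting of edges having both endpoints in $S$ is harmless, since it only weakens the upper bound. Everything else reduces to a direct pigeonhole count on the injective code map, so no deeper structural property of $G$ is needed beyond connectedness (which guarantees the codes are finite and bounded by $Diam(G)$).
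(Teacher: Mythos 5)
Your proof is correct and is essentially the standard argument for this bound: the paper itself only cites Theorem~\ref{mlb4} from \cite{mil21,mil21a} without reproducing a proof, and the proof in those references is exactly your counting scheme --- items with a zero coordinate are the $\beta_M(G)$ basis vertices and the at most $\beta_M(G)\Delta(G)$ edges incident to them, while all remaining items have pairwise distinct codes in $\{1,\dots,Diam(G)\}^{\beta_M(G)}$. Your handling of the delicate points (vertices at distance zero only when equal to a basis vertex, edges at distance zero only when incident to one, and harmless double counting of edges with both endpoints in the basis) matches the source argument.
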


\begin{dfn} \label{wset} \mbox{\rm(\cite{bal09})} For an arbitrary edge $e = uv \in E(G)$
$$W_{uv}=\{w \in V(G)| d(u,w)<d(v,w)\}$$
and
$$W_{vu}=\{w \in V(G)| d(v,w)<d(u,w)\}$$
\end{dfn}

\begin{lem} \label{mlb5} \mbox{\rm(\cite{mil21,mil21a})}
	Let $G$ be a connected graph, $uv\in E$ an arbitrary edge and $S$ a mixed resolving set, then
 \begin{itemize}
  \item [a)] $W_{vu}\cap S\neq 0;$
   \item [b)] $W_{uv}\cap S\neq 0$.
  \end{itemize}
\end{lem}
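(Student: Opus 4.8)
The plan is to prove each part by invoking the defining property of a mixed resolving set on a carefully chosen pair consisting of the edge $e = uv$ together with one of its endpoints. For part b) I would look at the pair formed by the edge $e = uv$ and the vertex $v$. These are two distinct items of $V(G) \cup E(G)$, so since $S$ is a mixed resolving set there must exist some $w \in S$ with $d(e,w) \neq d(v,w)$.

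The key observation, which does essentially all of the work, is the elementary inequality $d(e,w) = \min\{d(u,w), d(v,w)\} \leq d(v,w)$, valid for every vertex $w$. Combining it with $d(e,w) \neq d(v,w)$ forces the strict inequality $d(e,w) < d(v,w)$, that is, $\min\{d(u,w), d(v,w)\} < d(v,w)$. The minimum can fall strictly below $d(v,w)$ only if it is attained at $u$ with $d(u,w) < d(v,w)$; hence $w \in W_{uv}$, and therefore $W_{uv} \cap S \neq \emptyset$.

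Part a) then follows by the symmetric argument: I would apply the mixed resolving property to the pair consisting of the edge $e = uv$ and the vertex $u$, obtain some $w \in S$ with $d(e,w) \neq d(u,w)$, use $d(e,w) \leq d(u,w)$ to upgrade this to $d(e,w) < d(u,w)$, and conclude $\min\{d(u,w), d(v,w)\} < d(u,w)$, which forces $d(v,w) < d(u,w)$, i.e. $w \in W_{vu}$.

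I do not expect a serious obstacle here; the whole argument reduces to choosing the right resolving pair and tracking the direction of the inequality as one passes from ``resolved'' to ``strictly closer to one endpoint.'' The only point requiring care is that this direction is pinned down entirely by the fact that the edge-to-vertex distance never exceeds the distance to either endpoint, so a genuine difference can only be a strict drop below the larger value. Connectedness of $G$ ensures all distances involved are finite, so the minima and comparisons are well defined.
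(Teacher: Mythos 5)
Your proof is correct, and it is the standard argument: the paper itself states this lemma without proof (quoting it from \cite{mil21,mil21a}), and your route --- resolving the pair consisting of the edge $uv$ and its endpoint $v$ (resp.\ $u$), then using $d(uv,w)=\min\{d(u,w),d(v,w)\}\leq d(v,w)$ to upgrade ``different'' to a strict drop, which forces $w\in W_{uv}$ (resp.\ $w\in W_{vu}$) --- is exactly the proof given in those cited sources. One cosmetic remark only: the sets in question should be compared with $\emptyset$ rather than $0$ (a typo inherited from the statement), which your write-up already handles correctly.
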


In the sequel we shall use the well-known concept of hitting sets \cite{hit}.

\begin{dfn} \label{hs1} For a given set $S$ and family $\mathop{F} =\{S_1,...,S_k\}$, $S_i \subseteq S$, $\bigcup \limits_{i = 1}^k S_i=S$,
a hitting set $H \subseteq S$ of family $F$ is a set which has a non-empty intersection with each $S_i$,
i.e. $(\forall i \in \{1,...,k\}) H \bigcap S_i \ne \emptyset$.
\end{dfn}

The minimal hitting set problem is to find a hitting set of the minimal cardinality. This problem is equivalent to the set covering problem and it is known to be NP-hard \cite{np}.

\begin{dfn} \mbox{\rm(\cite{mil21,mil21a})} \label{mlb6} Value $mhs_<(G)$ is defined to be the minimal cardinality of hitting sets of family $\{W_{uv}, W_{vu} | uv \in E(G) \}$.\end{dfn}

From Lemma \ref{mlb5} and Definition \ref{mlb6} the next Corollary follows:

\begin{cor} \label{mlb7} \mbox{\rm(\cite{mil21,mil21a})} For every connected graph $G$ it holds $\beta_M(G) \ge mhs_<(G)$.
\end{cor}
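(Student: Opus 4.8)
The plan is to derive the bound directly from Lemma~\ref{mlb5}, by recognizing that every mixed resolving set of $G$ is essentially a hitting set of the family appearing in Definition~\ref{mlb6}. Concretely, I would start with an arbitrary mixed resolving set $S$ of $G$ and apply parts (a) and (b) of Lemma~\ref{mlb5} simultaneously to every edge $uv \in E(G)$: they guarantee $W_{uv} \cap S \neq \emptyset$ and $W_{vu} \cap S \neq \emptyset$ for each such edge. Thus $S$ meets every member of the family $\{W_{uv}, W_{vu} \mid uv \in E(G)\}$.

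The next step is to translate ``meets every member'' into the language of Definition~\ref{hs1}. Let $U = \bigcup_{uv \in E(G)} (W_{uv} \cup W_{vu})$ be the ground set of the associated hitting-set instance. The set $H := S \cap U$ then satisfies $H \cap W_{uv} \neq \emptyset$ and $H \cap W_{vu} \neq \emptyset$ for every edge, since the intersection points found above already lie in $U$; hence $H$ is a hitting set of the family. By the definition of $mhs_<(G)$ as the minimum cardinality over all such hitting sets, we obtain $mhs_<(G) \leq |H| \leq |S|$.

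Finally, specializing $S$ to a mixed metric basis, so that $|S| = \beta_M(G)$, yields $mhs_<(G) \leq \beta_M(G)$, which is exactly the claimed inequality.

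The argument is essentially immediate once Lemma~\ref{mlb5} is available, so there is no genuine obstacle here. The only point requiring a little care is the bookkeeping about the ground set: a mixed resolving set may contain vertices lying in none of the sets $W_{uv}, W_{vu}$, so one should pass to the restriction $S \cap U$ before invoking Definition~\ref{hs1}, observing that this restriction can only decrease the cardinality and therefore does not weaken the bound.
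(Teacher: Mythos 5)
Your proof is correct and takes essentially the same route as the paper, which gives no separate argument at all but simply states that the corollary follows from Lemma~\ref{mlb5} and Definition~\ref{mlb6} --- precisely the observation that every mixed resolving set hits each $W_{uv}$ and $W_{vu}$ and is therefore a hitting set of that family. Your extra bookkeeping about the ground set is harmless but unnecessary: in a connected graph with at least one edge, every vertex $w$ lies in $W_{wv}$ for any neighbor $v$ of $w$, so the union of the family is all of $V(G)$.
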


Let $n$ and $k$ be positive integers ($n > k$) and $[n] = \{1,2,...,n\}$.
Then $k$-subsets are subsets of $[n]$ which have cardinality equal to $k$.
The Johnson graph $J_{n,k}$ is an undirected graph defined on all $k$-subsets of set $[n]$ as vertices,
where two $k$-subsets are adjacent if their intersection has cardinality equal to $k-1$.
Mathematically, $V(J_{n,k}) = \{ A | A \subset [n], |A|=k\}$ and
$E(J_{n,k}) = \{ AB | A,B \subset [n], |A|=|B|=k, |A \cap B|=k-1\}$.

It is easy to see that $J_{n,k}$ and $J_{n,n-k}$ are isomorphic, so
we shall only consider Johnson graphs with $n \ge 2k$.
The distance between two vertices $A$ and $B$ in $J_{n,k}$
can be computed by:

\begin{equation} \label{distj}
d(A,B) = |A \setminus B| = |B \setminus A| = k-|A \cap B|
\end{equation}

Considering (\ref{distj}), it is easy to see that Johnson graph $J_{n,k}$ is a $k (n-k)$ regular graph
of diameter $k$.
The following two propositions give the exact value of $\beta(J_{n,2})$ for $n \ge 6$ and an upper bound for
$\beta(J_{n,k})$ for $k \ge 3$.

\begin{pro} \label{metdkj2} \mbox{\rm(\cite{metdj1})} For $n \ge 6$, it holds $\beta(J_{n,2}) = \lceil \frac{2n}{3} \rceil$.
 \end{pro}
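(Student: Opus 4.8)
The plan is to translate the resolvability question into a purely combinatorial statement about edges of $K_n$. Since $Diam(J_{n,2})=2$ and $d(A,B)=2-|A\cap B|$, for a landmark vertex $e=\{a,b\}$ and any vertex $v=\{c,d\}$ we have $d(e,v)=2-|e\cap v|$, so the distance vector of $v$ with respect to a candidate set $S$ is completely determined by the integers $|e\cap v|$, $e\in S$. Viewing the $2$-subsets as edges of $K_n$ and $S$ as a subgraph $H=([n],S)$, I would encode each element $i\in[n]$ by its incidence pattern $\chi_i\in\{0,1\}^S$ recording which landmark edges are incident to $i$, and observe that for a non-landmark vertex $v=\{c,d\}$ one has $|e\cap v|=\chi_c(e)+\chi_d(e)$. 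Hence $S$ resolves $J_{n,2}$ if and only if the vectors $\chi_c+\chi_d$ are pairwise distinct over all $2$-subsets $\{c,d\}$, i.e. $\{\chi_i : i\in[n]\}$ is a Sidon set in $\mathbb{Z}^S$ (distinct vectors with distinct pairwise sums). The two immediate consequences I would record are that $H$ has at most one isolated vertex and no isolated-edge component.

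For the upper bound I would exhibit an explicit resolving set of size $\lceil\frac{2n}{3}\rceil$. Write $n=3q+r$ with $r\in\{0,1,2\}$ and take a vertex-disjoint union of $q$ paths on three vertices $P_3$ when $r=0$, and for $r\in\{1,2\}$ replace one $P_3$ by a single longer path $P_{3+r}$ (a $P_4$ or $P_5$). A $P_3$ uses three elements and two edges, and the replacement absorbs the remainder, so the total edge count is exactly $2q+r=\lceil\frac{2n}{3}\rceil$. Verifying resolvability reduces to checking the Sidon property: within one path the three incidence patterns and their pairwise sums are distinct, while a sum involving two different paths is supported on both of them and is therefore separated from every single-path sum; the remainder gadget is then checked directly.

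The crux is the lower bound, which I would attack by a discharging argument on $H$. Letting $n_i$ denote the number of degree-$i$ vertices, the target $|S|\ge\frac{2n}{3}$ is equivalent to $\sum_i(3\deg_i-4)\ge 0$, and since a $P_3$ is exactly balanced this simultaneously identifies the extremal configurations. I would charge each leaf one unit to its unique neighbour, which has degree $\ge 2$ because isolated edges are forbidden; a degree-$d$ vertex then retains surplus $(3d-4)-(\text{leaf neighbours})\ge 2d-4\ge 0$. Summing shows $\sum_{\deg\ge 2}(3\deg-4)\ge n_1$, whence $\sum_i(3\deg_i-4)\ge -4n_0$ and, using $n_0\le 1$, the preliminary estimate $|S|\ge\frac{2n-2}{3}$.

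The hard part will be squeezing out the exact ceiling rather than this weaker bound; for $n\equiv 1\pmod 3$ the naive count falls short by exactly one, precisely when $n_0=1$. The missing surplus must be extracted from the full Sidon condition: I would show that an isolated vertex cannot coexist with a $P_3$-component (for such a path $a$--$b$--$c$ and isolated $z$ the pairs $\{z,b\}$ and $\{a,c\}$ collide, since $\chi_b=\chi_a+\chi_c$), so when $n_0=1$ every degree-$2$ vertex has at most one leaf neighbour and thus strictly positive residual. Carrying out this accounting of the forbidden low-degree patterns modulo $3$, together with $n_0\le 1$, upgrades the estimate to $|S|\ge\lceil\frac{2n}{3}\rceil$ and matches the construction. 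I expect this case analysis around the isolated vertex to be the main technical obstacle.
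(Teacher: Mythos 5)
The paper itself imports this proposition from \cite{metdj1} without proof, so your attempt can only be judged on its own merits --- and it has two genuine gaps, one on each side of the equality. The reduction is fine: since $d(e,v)=2-|e\cap v|$ for all vertices, a landmark set $S$, viewed as a graph $H$ on $[n]$, resolves $J_{n,2}$ exactly when the sums $\chi_c+\chi_d$ are pairwise distinct, and your modular verification scheme (within-block sums distinct, cross-block sums recoverable by support) is sound. The problem is your remainder gadget: \emph{$P_4$ is not Sidon}. For a path $a$--$b$--$c$--$d$ with landmark edges $\{a,b\},\{b,c\},\{c,d\}$, the pairs $\{a,c\}$ and $\{b,d\}$ each meet all three of these edges in exactly one point and are disjoint from every landmark in the other components, so $\chi_a+\chi_c=\chi_b+\chi_d$ and the two vertices of $J_{n,2}$ are unresolved. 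Hence your construction fails for every $n\equiv 1\pmod 3$ (already at $n=7$, where $P_3\cup P_4$ with $5$ edges does not resolve). The count is salvageable with a different gadget of the same size, e.g.\ $q-1$ copies of $P_3$ plus a star $K_{1,3}$ on the remaining four points: the star's patterns $(1,1,1),e_1,e_2,e_3$ have pairwise distinct sums, and $2(q-1)+3=\lceil 2n/3\rceil$; but as written your upper bound is wrong in one of the three residue classes.

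On the lower bound, your discharging correctly gives $|S|\ge\frac{2n-2}{3}$ when $n_0\le 1$, and your extremal analysis for $n\equiv 1\pmod 3$ is the right idea: if $|S|=2q$ and $n_0=1$ the total charge is exactly $-4$, forcing all non-isolated components to be $P_3$'s, which your collision $\{z,b\}$ versus $\{a,c\}$ rules out. But your claim that the shortfall occurs \emph{precisely} when $n\equiv 1\pmod 3$ is false: for $n=3q+2$ the target is $2q+2$ while $\frac{2n-2}{3}=2q+\frac{2}{3}$ only yields $|S|\ge 2q+1$, again one short whenever $n_0=1$. This case needs its own closing argument, which your sketch does not contain. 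It can be done with the same toolkit: if $|S|=2q+1$ and $n_0=1$, the total charge is exactly $-2$, so the non-isolated components carry total surplus exactly $2$; since a component $C$ has surplus $6|E(C)|-4|V(C)|$, which is $2|V(C)|-6$ for trees and at least $6$ if $C$ contains a cycle, excluding $P_3$ components (by the isolated-vertex collision) and $K_2$ components forces a single tree on four vertices, i.e.\ $n=5$, contradicting $n\ge 6$. Without this supplement your argument proves only $\beta(J_{n,2})\ge 2q+1$ in the residue class $n\equiv 2\pmod 3$, so both the construction and the bound must be repaired before the proposed proof is complete.
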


\begin{pro} \label{metdkj} \mbox{\rm(\cite{metdj})} For $n \ge 2k$ and $k \ge 3$, it holds $\beta(J_{n,k}) \le \lfloor \frac{k \cdot (n+1)}{k+1} \rfloor$.
 \end{pro}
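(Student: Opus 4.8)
The statement is an upper bound, so the plan is to exhibit an explicit resolving set $S\subseteq V(J_{n,k})$ with $|S|\le\lfloor\frac{k(n+1)}{k+1}\rfloor$. By (\ref{distj}) a vertex $W$ resolves two vertices $A,B$ exactly when $|A\cap W|\ne|B\cap W|$, so $S$ is a resolving set if and only if the map $A\mapsto\big(|A\cap W|\big)_{W\in S}$ is injective on the $k$-subsets of $[n]$. Writing $\chi_A\in\{0,1\}^n$ for the characteristic vector of $A$, so that $|A\cap W|=\langle\chi_A,\chi_W\rangle$, it therefore suffices to choose $S$ so that these numbers let one reconstruct $\chi_A$ entirely.

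The construction I would use is block-based. Partition $[n]$ into $q=\lfloor n/(k+1)\rfloor$ blocks $Y_1,\dots,Y_q$ of size $k+1$ together with a remainder block $Y'$ of size $s=n-q(k+1)\in\{0,\dots,k\}$; note $q\ge1$ since $n\ge2k\ge k+1$. In each full block $Y_i$ fix a distinguished element $y_0^{(i)}$ and place into $S$ the $k$ many $k$-subsets $Y_i\setminus\{y\}$ for $y\in Y_i\setminus\{y_0^{(i)}\}$. If $s\ge1$, also fix a $(k-1)$-subset $R$ of the full-block region $[n]\setminus Y'$ and, for each $y\in Y'$, add the $k$-subset $W_y=\{y\}\cup R$. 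A short computation gives $|S|=qk+s$ (the remainder contributing nothing when $s=0$), and using the identity $\lfloor\frac{k(n+1)}{k+1}\rfloor=n+1-\lceil\frac{n+1}{k+1}\rceil$ one checks that $qk+s$ equals this value in both cases $s=0$ and $s\ge1$.

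It remains to show that the intersection counts reconstruct $\chi_A$. For a full block $Y_i$ the recorded numbers are $c_y:=|A\cap(Y_i\setminus\{y\})|=|A\cap Y_i|-\chi_A(y)$ for $y\ne y_0^{(i)}$, and the key claim is that the map sending a subset $T\subseteq Y_i$ to $\big(|T|-\chi_T(y)\big)_{y\ne y_0^{(i)}}$ is injective. I expect this block-decoding lemma to be the main obstacle and the exact place where the hypothesis $k\ge3$ enters: if two subsets collide, their indicators differ by the same constant $\delta\in\{-1,0,1\}$ off $y_0^{(i)}$, and the cases $\delta=\pm1$ force one block-intersection to have size $\ge k$ and the other size $\le1$, which is impossible once $k-1\ge2$. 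Equivalently, when the $c_y$ are all equal the common value lies in $\{0,1\}$ or in $\{k-1,k\}$, and these ranges are disjoint precisely when $k\ge3$. Granting the lemma, each $A\cap Y_i$ is recovered; hence $|A\cap R|$ is determined, and then every equation $|A\cap W_y|=\chi_A(y)+|A\cap R|$ recovers $\chi_A(y)$ for $y\in Y'$. Thus $\chi_A$ is reconstructed on all of $[n]$, the map is injective, and $S$ is a resolving set of the required size. The remaining work is routine: checking that the $W_y$ are genuine $k$-subsets (which needs $|[n]\setminus Y'|=q(k+1)\ge k-1$) and confirming the arithmetic matching $qk+s$ to the floor expression.
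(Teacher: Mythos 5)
Your proposal is correct, but note that there is nothing in this paper to compare it against: Proposition \ref{metdkj} is stated as background and quoted from \cite{metdj} without proof, so any comparison is with the cited source rather than with an in-paper argument. Checking your construction on its own terms, everything works. The reduction of resolvability to injectivity of $A \mapsto (|A \cap W|)_{W \in S}$ is immediate from (\ref{distj}); the size count is right, since with $n = q(k+1)+s$, $0 \le s \le k$, one gets $\lfloor k(n+1)/(k+1)\rfloor = qk + \lfloor k(s+1)/(k+1)\rfloor = qk+s$ (also for $s=0$, where the floor term vanishes); and your block-decoding lemma holds exactly as you sketch it: a collision forces $\chi_T - \chi_{T'} \equiv \delta \in \{-1,0,1\}$ off $y_0^{(i)}$, the case $\delta = 0$ gives $T = T'$ (equality at $y_0^{(i)}$ follows from $|T| = |T'|$), and $\delta = \pm 1$ forces $|T| \ge k$ and $|T'| \le 1$ while $\big||T|-|T'|\big| = 1$, impossible for $k \ge 3$. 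Your diagnosis of where $k \ge 3$ enters is exactly right, and instructive: for $k=2$ the block $\{y_0,y_1,y_2\}$ admits the collision $T = \{y_1,y_2\}$, $T' = \{y_0\}$, both realizable as intersections, even though by Proposition \ref{metdkj2} the numerical bound itself would still be valid there ($\lfloor 2(n+1)/3\rfloor = \lceil 2n/3\rceil$) --- so it is only this particular construction, not the inequality, that needs $k \ge 3$. The remainder decoding via $|A \cap W_y| = \chi_A(y) + |A\cap R|$ is sound once the full blocks determine $A \cap ([n] \setminus Y')$, and the existence checks ($q \ge 1$ from $n \ge 2k \ge k+1$, and $q(k+1) \ge k-1$ for choosing $R$) are as routine as you claim. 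In spirit this is the same kind of partition-based resolving-set construction used in \cite{metdj} for Johnson graphs, so you have not found a genuinely different route, but you have supplied a complete, self-contained proof of a statement this paper only cites.
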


The following three propositions give the exact value of $eqdim(J_{n,2})$, $eqdim(J_{2k,k})$ for odd $k$ and an upper bound for
$\beta(J_{n,3})$ for $n \ge 9$.

\begin{pro} \mbox{\rm(\cite{eqdimj})} $eqdim(J_{n,2}) = \begin{cases}
2, & n=4 \\
3, & n \ge 5 \end{cases}$
 \end{pro}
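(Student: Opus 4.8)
The plan is to exploit the fact that $J_{n,2}$ has diameter $2$, so by (\ref{distj}) every pair of distinct vertices $x,w$ satisfies $d(x,w)=1$ when $|x\cap w|=1$ and $d(x,w)=2$ when $x\cap w=\emptyset$. Hence a vertex $w$ equalizes two vertices $x,y$ (both different from $w$) precisely when $x$ and $y$ either both meet $w$ or both avoid $w$. For a candidate set $S=\{w_1,\dots,w_m\}$ I would attach to each vertex $x\notin S$ its \emph{meeting pattern} $p(x)\in\{0,1\}^m$ with $p_i(x)=1$ iff $x\cap w_i\neq\emptyset$. Then $S$ is a distance-equalizer set if and only if no two vertices outside $S$ have complementary meeting patterns (patterns differing in every coordinate), since a pair fails to be equalized exactly when its distances to each $w_i$ disagree everywhere. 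This reformulation turns the whole problem into a question about which antipodal pairs of the cube $\{0,1\}^m$ can be realized.

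For $n=4$ I would first rule out $|S|=1$: a single vertex $w=\{a,b\}$ has neighbours at distance $1$ (those meeting $w$) and the disjoint $2$-subset at distance $2$, so not all remaining vertices share one common distance from $w$, giving $eqdim(J_{4,2})\ge 2$. For the matching upper bound I would take the complementary pair $S=\{\{1,2\},\{3,4\}\}$; each of the four remaining vertices $\{i,j\}$ with $i\in\{1,2\}$, $j\in\{3,4\}$ meets both elements of $S$, so all carry pattern $(1,1)$ and no complementary pair occurs, proving $eqdim(J_{4,2})=2$.

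For $n\ge 5$ the lower bound $eqdim(J_{n,2})\ge 3$ follows by showing that no $2$-element set works. Two distinct $2$-subsets $w_1,w_2$ are either disjoint or share one element. If $w_1=\{a,b\}$ and $w_2=\{c,d\}$ are disjoint, pick $e\notin\{a,b,c,d\}$ (available since $n\ge 5$); then $\{a,e\}$ has pattern $(1,0)$ and $\{c,e\}$ has pattern $(0,1)$, a complementary pair outside $S$. If $w_1=\{a,b\}$ and $w_2=\{a,c\}$ share the element $a$, pick $e\notin\{a,b,c\}$; then $\{b,e\}$ meets only $w_1$ and $\{c,e\}$ meets only $w_2$, again complementary. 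For the upper bound I would exhibit the \emph{triangle} set $S=\{\{1,2\},\{1,3\},\{2,3\}\}$ and classify each $x\notin S$ by $x\cap\{1,2,3\}$: the cases $\emptyset,\{1\},\{2\},\{3\}$ yield exactly the patterns $(0,0,0),(1,1,0),(1,0,1),(0,1,1)$, while $|x\cap\{1,2,3\}|=2$ forces $x\in S$. These four patterns contain no antipodal pair (their complements $(1,1,1),(0,0,1),(0,1,0),(1,0,0)$ never occur), so $S$ is a distance-equalizer set and $eqdim(J_{n,2})=3$.

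The main obstacle I anticipate is the upper bound for $n\ge 5$: it hinges on the right choice of three vertices, and the symmetric triangle $\{1,2\},\{1,3\},\{2,3\}$ is exactly the configuration whose induced meeting patterns select one vertex from each of the four antipodal pairs of $\{0,1\}^3$. Once the meeting-pattern reformulation is in place, the lower-bound case analysis and the $n=4$ computation are routine, so the crux is recognizing this combinatorial design.
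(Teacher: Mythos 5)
This proposition is quoted in the paper from \cite{eqdimj} and no proof of it appears in the paper itself, so there is no in-paper argument to compare against; your proposal therefore has to stand on its own, and it does: it is correct and complete in all essentials. Since $J_{n,2}$ has diameter $2$ and $d(x,w)=2-|x\cap w|$, your reduction is exact: a vertex $w\in S$ equalizes $x,y\notin S$ iff $x$ and $y$ agree on whether they meet $w$, so $S$ fails precisely when two outside vertices have complementary meeting patterns, and all your verifications check out --- the pair $\{\{1,2\},\{3,4\}\}$ gives the constant pattern $(1,1)$ for $n=4$; both subcases (disjoint or intersecting $w_1,w_2$) produce an explicit antipodal pair for $n\ge 5$, with the chosen vertices genuinely lying outside $S$; and the triangle $\{\{1,2\},\{1,3\},\{2,3\}\}$ realizes only the even-weight patterns $(0,0,0),(1,1,0),(1,0,1),(0,1,1)$, whose complements are all odd-weight and hence unrealizable (a $2$-subset meeting all three, or exactly one, of the triangle's edges would have to intersect $\{1,2,3\}$ in a forbidden way). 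One small formal gap: for $n\ge 5$ you rule out only sets of size exactly $2$, whereas $eqdim(J_{n,2})\ge 3$ requires ruling out all sets of size at most $2$. This is repaired in one line by monotonicity --- any superset of a distance-equalizer set is again a distance-equalizer set, since the pairs to be equalized only shrink --- so a working singleton would yield a working pair, contradicting your case analysis; alternatively, a direct one-line argument for singletons (pick $\{a,c\}$ and $\{c,d\}$ disjoint from and meeting $w=\{a,b\}$ respectively) works just as well, as you already did implicitly for $n=4$. The antipodal-pattern reformulation is a nice structural device: it isolates the parity obstruction that makes the triangle configuration work, which is more illuminating than a bare case check.
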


\begin{pro} \mbox{\rm(\cite{eqdimj})} For any odd $k \ge 3$ it holds $eqdim(J_{2k,k}) = \frac{1}{2} \cdot \binom{2k}{k}$.
 \end{pro}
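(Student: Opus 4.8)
The plan is to exploit the antipodal structure of $J_{2k,k}$. First I would observe that in $J_{2k,k}$ every vertex $A$ has a unique vertex at the maximum distance $k$, namely its complement $\bar{A} = [2k] \setminus A$: indeed, by (\ref{distj}), $d(A,B) = k$ holds exactly when $|A \cap B| = 0$, and the only $k$-subset of $[2k]$ disjoint from $A$ is $\bar{A}$. Thus the $\binom{2k}{k}$ vertices split into $\frac{1}{2}\binom{2k}{k}$ antipodal pairs $\{A, \bar{A}\}$, which is precisely the claimed value.

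For the lower bound, the key computation is that for an arbitrary vertex $w$ one has $d(A,w) = k - |A \cap w|$ and $d(\bar{A},w) = k - |\bar{A}\cap w| = |A \cap w|$, so $d(A,w) + d(\bar{A},w) = k$ for every $w$. Consequently $d(A,w) = d(\bar{A},w)$ would force $d(A,w) = k/2$, which is impossible because $k$ is odd. Hence no vertex of $J_{2k,k}$ equalizes the distances to an antipodal pair. It follows that every distance-equalizer set $S$ must contain at least one member of each pair $\{A,\bar{A}\}$; otherwise $A$ and $\bar{A}$ would both lie in $V \setminus S$ and could not be equalized, contradicting the defining property. This already yields $eqdim(J_{2k,k}) \geq \frac{1}{2}\binom{2k}{k}$.

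For the upper bound I would take $S$ to be any transversal of the antipodal pairs, i.e.\ one vertex from each pair, so that $|S| = \frac{1}{2}\binom{2k}{k}$ and $V \setminus S$ is again a transversal. Then any two distinct $x, y \in V \setminus S$ are necessarily non-antipodal. I would first show that such a pair is always equalized by some vertex $w$ of the whole graph: writing $a = |x \cap y|$ and $b = |x \setminus y| = |y \setminus x| = k - a$ (with $a \geq 1$ since $x,y$ are non-antipodal and $b \geq 1$ since $x \neq y$), a $k$-subset $w$ satisfies $d(x,w) = d(y,w)$ exactly when $w$ meets $x \setminus y$ and $y \setminus x$ in the same number of points, and a short count over the four regions $x\cap y$, $x\setminus y$, $y\setminus x$, $[2k]\setminus(x\cup y)$ (whose sizes are $a,b,b,a$) shows such a $w$ exists whenever $a \geq 1$. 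The decisive structural observation is then that the set of equalizers of $\{x,y\}$ is closed under complementation: since $d(z,\bar{w}) = k - d(z,w)$ for every vertex $z$, we have $d(x,w) = d(y,w)$ if and only if $d(x,\bar{w}) = d(y,\bar{w})$. Because $S$ meets every antipodal pair and the nonempty equalizer set is a union of antipodal pairs, $S$ must contain an equalizer of $\{x,y\}$. Thus $S$ is a distance-equalizer set, giving the matching upper bound.

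I expect the main obstacle to be the upper-bound construction rather than the lower bound. The lower bound is essentially a one-line parity argument, but the upper bound requires two things to mesh: the purely existential fact that every non-antipodal pair admits an equalizer, and the crucial closure of equalizer sets under the antipodal map, which is what allows a half-size transversal to resolve every pair outside it. The oddness of $k$ is essential on both sides: it is what makes antipodal pairs unequalizable (forcing the lower bound) and simultaneously what keeps the transversal construction tight.
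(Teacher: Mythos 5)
Your proof is correct. Note that this manuscript states the proposition without proof, importing it from \cite{eqdimj}, so there is no internal argument to compare against; your antipodal-pair argument is the natural one and, as far as I can verify, complete: the identity $d(A,w)+d(\bar{A},w)=k$ gives the parity obstruction for the lower bound, and for the upper bound your region count works (choosing $|w\cap(x\setminus y)|=|w\cap(y\setminus x)|=j$ with $\max\{0,\lceil (b-a)/2\rceil\}\le j\le\min\{b,\lfloor k/2\rfloor\}$, a nonempty range whenever $a\ge 1$), after which closure of the equalizer set under complementation, via $d(z,\bar{w})=k-d(z,w)$, guarantees the transversal $S$ contains an equalizer. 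One small correction to your closing remark: the oddness of $k$ is needed only for the lower bound, since neither the existence of an equalizer for non-antipodal pairs nor the complementation closure uses parity, so the transversal construction yields $eqdim(J_{2k,k})\le \frac{1}{2}\binom{2k}{k}$ for every $k$; it is the lower bound alone that fails for even $k$, where a vertex $w$ with $|A\cap w|=k/2$ equalizes an antipodal pair.
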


\begin{pro} \mbox{\rm(\cite{eqdimj})} For $n \ge 9$ it holds  $eqdim(J_{n,3}) \le n-2$.
 \end{pro}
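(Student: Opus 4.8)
The plan is to exhibit an explicit distance-equalizer set of cardinality $n-2$. I would take $S = \{\{1,2,i\} \mid 3 \le i \le n\}$, the family of all $3$-subsets containing the fixed pair $\{1,2\}$. Clearly $|S| = n-2$, so the whole task reduces to checking that $S$ equalizes every pair of vertices lying outside it.

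Fix distinct $x,y \in V(J_{n,3}) \setminus S$ and set $P = x \setminus y$, $Q = y \setminus x$, so that $|P| = |Q| = d(x,y) = t \in \{1,2,3\}$ and $P \cap Q = \emptyset$. Since $d(x,w) = 3 - |x \cap w|$ by (\ref{distj}), a vertex $w$ equalizes $x$ and $y$ exactly when $|x \cap w| = |y \cap w|$, which, rewritten through the symmetric difference $x \triangle y = P \cup Q$, is equivalent to $|w \cap P| = |w \cap Q|$. For $w = \{1,2,i\}$ this reads $\alpha + [i \in P] = \beta + [i \in Q]$, where $\alpha = |\{1,2\} \cap P|$, $\beta = |\{1,2\} \cap Q|$, and $[\cdot]$ denotes the Iverson indicator. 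The decisive preliminary observation is that, because $x,y \notin S$, neither $x$ nor $y$ contains both $1$ and $2$; hence $\alpha,\beta \in \{0,1\}$, which already eliminates the only genuinely obstructive configuration (one triple absorbing both distinguished coordinates).

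The verification then splits into two short cases. If $\alpha = \beta$, I look for an index $i \in \{3,\dots,n\}$ avoiding $P \cup Q = x \triangle y$; then $w = \{1,2,i\}$ gives $|w \cap P| = |w \cap Q| = \alpha$. Since $|x \triangle y| = 2t \le 6$ while $|\{3,\dots,n\}| = n-2$, such an index exists precisely when $n-2 > 6$, i.e.\ $n \ge 9$ --- this is exactly where the hypothesis enters, the tight instance being two disjoint triples $x,y \subseteq \{3,\dots,n\}$. If $\alpha \ne \beta$, say $\alpha = 0$ and $\beta = 1$ (the symmetric swap is identical, and $\alpha,\beta \in \{0,1\}$ leaves no other possibility), then every element of $P$ exceeds $2$, so choosing $i \in P$ yields $|w \cap P| = 1 = \beta = |w \cap Q|$, as required.

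The main obstacle is not the case analysis but isolating the correct construction together with the single structural remark that makes it succeed: the would-be bad pairs are exactly those in which one of $x,y$ contains $\{1,2\}$, and these are precisely the pairs excluded by the requirement $x,y \notin S$. Once the count $n \ge 9$ is pinned to the all-disjoint subcase, the bound $eqdim(J_{n,3}) \le n-2$ follows immediately. I would still double-check the residual low-$t$ subcases and the $\alpha,\beta$ swaps to be certain no pair escapes, but those reductions are routine.
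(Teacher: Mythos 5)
Your proposal is correct, but note that the paper you were asked to match contains no proof of this proposition: it is quoted verbatim from \cite{eqdimj} as background, so there is no in-paper argument to compare against line by line. Judged on its own, your argument is sound and complete. The set $S=\{\{1,2,i\}\mid 3\le i\le n\}$ is indeed a distance-equalizer set: the reduction of $d(x,w)=d(y,w)$ to $|w\cap P|=|w\cap Q|$ via (\ref{distj}) is valid, the observation that $x,y\notin S$ forces $\alpha,\beta\in\{0,1\}$ correctly removes the only bad configuration (a triple containing both $1$ and $2$ would allow $\alpha=2$ or $\beta=2$, making the balanced choice impossible when $P\cup Q$ exhausts the complement), and your two cases are exhaustive. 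In the case $\alpha=\beta$ your count is the right one: at most $2t\le 6$ indices of $\{3,\dots,n\}$ are forbidden, so $n-2\ge 7$, i.e.\ $n\ge 9$, guarantees a free index $i\notin x\triangle y$; and your identification of the tight instance (two disjoint triples inside $\{3,\dots,n\}$, which genuinely defeats the construction when $n=8$) shows the hypothesis is used exactly where it must be. In the case $\alpha\ne\beta$, the choice $i\in P$ is always available since $x\ne y$ gives $P\ne\emptyset$ and $\alpha=0$ places $P$ inside $\{3,\dots,n\}$. The one cosmetic slip is the line ``$|w\cap P|=1=\beta=|w\cap Q|$'': what you mean is $|w\cap Q|=\beta+[i\in Q]=1+0=1$, matching $|w\cap P|=0+1=1$; the equality chain as written conflates $\beta$ with $|w\cap Q|$, but the conclusion is right. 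Whether \cite{eqdimj} uses this same fixed-pair construction cannot be checked from the present source, but your proof establishes the stated bound independently.
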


\section{New results}

This section is devoted to edge and mixed metric dimensions of Johnson graphs.
First, we construct a class of sets which are not edge resolving sets of $J_{n,k}$.
Based on this construction, we directly derive a necessary condition for a given set to be
an edge resolving set of $J_{n,k}$. Then, we obtain the exact values of edge and mixed metric dimensions of $J_{n,2}$.

\subsection{Some properties of edge resolving sets of $J_{n,k}$}

The next definition and lemma describe sets with relatively large cardinality, which, as well as all of their subsets, are not edge resolving sets of $J_{n,k}$.

\begin{dfn} \label{sprim} Let $x,y,z$ be some mutually distinct elements from $[n]$. Set $S_{x,y,z}' \subset V(J_{n,k})$
is defined as follows:
$$ S_{x,y,z}' = V(J_{n,k}) \setminus \{\{x,z\} \cup T, \{y,z\} \cup T\;| \; T \subset [n] \setminus \{x,y,z\},|T|=k-2\} $$
\end{dfn}

It should be noted that set $T$ goes through all subsets of set $[n] \setminus \{x,y,z\}$ of cardinality $k-2$, i.e.
$|S_{x,y,z}'| = \binom{n}{k} - 2 \cdot \binom{n-3}{k-2}$ for all $k \ge 2$.

\begin{lem} \label{noters} For arbitrary $k \ge 2$ and arbitrary mutually distinct $x,y,z \in [n]$, set $S_{x,y,z}'$ is not an edge resolving set of $J_{n,k}$.
\end{lem}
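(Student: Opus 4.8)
The plan is to produce two distinct edges of $J_{n,k}$ whose only resolving vertices are precisely the vertices that Definition~\ref{sprim} removes from $V(J_{n,k})$; then no vertex left in $S'_{x,y,z}$ can resolve them, so $S'_{x,y,z}$ fails to be an edge resolving set. First I would record what the removed set is: writing $R = V(J_{n,k})\setminus S'_{x,y,z}$, Definition~\ref{sprim} says $R$ consists exactly of the $k$-subsets $A$ with $z\in A$ and $|A\cap\{x,y\}|=1$.

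Next I would choose the candidate edges. Fix any $(k-2)$-subset $T_0\subseteq[n]\setminus\{x,y,z\}$ (possible since $n\ge 2k$ forces $n-3\ge k-2$), and take the two edges sharing the common endpoint $B=\{x,y\}\cup T_0$: namely $e_1=AB$ with $A=\{x,z\}\cup T_0$ and $e_2=A'B$ with $A'=\{y,z\}\cup T_0$. Both are genuine edges because $|A\cap B|=|A'\cap B|=k-1$, and $e_1\ne e_2$ since $A\ne A'$. A useful sanity check is that the transposition $\sigma$ of $x$ and $y$ is an automorphism of $J_{n,k}$ sending $e_1$ to $e_2$, which already forces every resolving vertex to contain exactly one of $x,y$; the computation below sharpens this to membership in $R$.

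Then, for an arbitrary vertex $w$, I would evaluate the edge distances through $d(A,w)=k-|A\cap w|$ and $d(e,w)=\min$ of the two endpoint distances. Abbreviating by $X,Y,Z\in\{0,1\}$ the indicators of $x\in w$, $y\in w$, $z\in w$, and setting $t=|T_0\cap w|$, the shared endpoint makes the two distances collapse to $d(e_1,w)=k-t-X-\max\{Y,Z\}$ and $d(e_2,w)=k-t-Y-\max\{X,Z\}$, so that
\[
d(e_1,w)-d(e_2,w)=(Y-X)+\bigl(\max\{X,Z\}-\max\{Y,Z\}\bigr).
\]
A short case check on $X,Y,Z$ shows this vanishes whenever $X=Y$ and equals $\pm Z$ when exactly one of $X,Y$ is $1$. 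Hence $w$ resolves $e_1$ and $e_2$ if and only if $z\in w$ and exactly one of $x,y$ lies in $w$, i.e.\ if and only if $w\in R$.

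Finally I would conclude: since $S'_{x,y,z}=V(J_{n,k})\setminus R$ is disjoint from $R$, no vertex of $S'_{x,y,z}$ resolves $e_1$ and $e_2$, so $S'_{x,y,z}$ is not an edge resolving set (and the same two edges show that no subset of it is either). The only genuinely delicate step is the choice of edges in the second paragraph: an arbitrary $\sigma$-related pair would only guarantee that resolving vertices avoid those fixed by the $x\leftrightarrow y$ swap, a strictly larger set than $R$. Forcing the two edges to share the endpoint $B=\{x,y\}\cup T_0$ is exactly what reduces the distance comparison to the clean difference above and pins the resolving set down to $R$; the ensuing case analysis on $X,Y,Z$ is routine.
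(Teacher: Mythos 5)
Your proposal is correct and takes essentially the same approach as the paper: both arguments exhibit the same pair of edges sharing the endpoint $\{x,y\}\cup T_0$ (your $e_1, e_2$ are exactly the paper's, up to relabeling) and verify that no vertex of $S'_{x,y,z}$ distinguishes them. Your indicator computation merely compresses the paper's three-case analysis into one closed-form difference, with the small bonus of an exact characterization (a vertex resolves $e_1,e_2$ if and only if it lies in the removed set $R$), which the paper does not state.
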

\begin{proof}Let $e_1$ and $e_2$ be edges from $E(J_{n,k})$ defined as follows.
Edge $e_1$ connects vertices $A = \{x,y\} \cup T^*$ and $B = \{x,z\} \cup T^*$, while
edge $e_2$ connects vertices $A = \{x,y\} \cup T^*$ and $C = \{y,z\} \cup T^*$,
where $T^* \subset [n] \setminus \{x,y,z\},|T^*|=k-2$.

We will prove that for each $X \in S_{x,y,z}'$ it holds $d(e_1,X) = d(e_2,X)$.
There are three possible cases for $X \in S'_{x,y,z}$.

\textit{\underline{Case 1}:} Let $X=\{x,y\} \cup T'$, where $T' \subset [n] \setminus \{x,y\}$ and $|T'|=k-2$. \\
Let $i = |T^* \cap T'|$, where $0 \le i \le k-2$. Then
$X \cap A = X \cap (\{x,y\}\cup T^*) = \{x,y\} \cup (T^* \cap T')$ and, according to (1), it holds
$d(X,A)= k - |X \cap A| = k-i-2$.
In the same way $d(X,B) = d(X,C)= k-i-1 - |T' \cap \{z\}|$ implying
$d(e_1,X) = d(e_2,X) = k-i-2$.
It shoud be noted that for $k=2$ it holds that $T^* = T' = \emptyset$
implying $i=0$, so $d(e_1,X) = d(e_2,X) = 0$.

\textit{\underline{Case 2}:} Let $X=\{p\} \cup T'$, where $p \in \{x,y,z\}$, $T' \subset [n] \setminus \{x,y,z\}$ and $|T'|=k-1$ \\
Let $i = |T^* \cap T'|$, where $0 \le i \le k-2$.
If $p=x$ then $X \cap A = X \cap (\{x,y\}\cup T^*) = \{x\} \cup (T^* \cap T')$ and, according to (1), it holds
$d(X,A)= k - |X \cap A| = k-i-1$. In the same way, $d(X,B)= k-i-1$
and $d(X,C)= k-i$ implying
$d(e_1,X) = d(e_2,X) = k-i-1$.
In a similar way, in subcases $p=y$ or $p=z$ the same result can be derived: $d(e_1,X) = d(e_2,X) = k-i-1$.

\textit{\underline{Case 3}:} Let $X=T'$, where $T' \subset [n] \setminus \{x,y,z\}$ and $|T'|=k$ \\
Let $i = |T^* \cap T'|$, where $0 \le i \le k-2$.
Then $X \cap A = X \cap T^* = T^* \cap T'$ and, according to (1), it holds
$d(X,A)= k - |X \cap A| = k-i$. In the same way, $d(X,B)= d(X,C)= k-i$ implying $d(e_1,X) = d(e_2,X) = k-i$.

Since, edges $e_1$ and $e_2$ have the same metric coordinates with respect to $S_{x,y,z}'$,
it follows that $S_{x,y,z}'$ is not an edge resolving set.
\end{proof}

If $\overline{S_{x,y,z}'}$ denotes the complement of $S_{x,y,z}'$, then we have the following corollary of Lemma \ref{noters}.

\begin{cor} \label{ers} If $S$ is an edge resolving set of $J_{n,k}$, then for every $x,y,z \in [n]$ it holds $S \cap \overline{S_{x,y,z}'} \ne \emptyset$.
\end{cor}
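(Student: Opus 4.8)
The plan is to prove the corollary by contraposition, repackaging what Lemma \ref{noters} already establishes. Fix arbitrary mutually distinct $x,y,z \in [n]$ and suppose, toward a contradiction, that $S \cap \overline{S_{x,y,z}'} = \emptyset$. The first step is to translate this intersection condition into a set inclusion: since $\overline{S_{x,y,z}'} = V(J_{n,k}) \setminus S_{x,y,z}'$ is by definition the complement of $S_{x,y,z}'$ in the vertex set, having empty intersection with it is equivalent to $S \subseteq S_{x,y,z}'$.

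The crucial observation I would extract from Lemma \ref{noters} is that its proof establishes more than the bare non-resolvability of the single set $S_{x,y,z}'$. It exhibits one concrete pair of edges $e_1 = AB$ and $e_2 = AC$, with $A = \{x,y\} \cup T^*$, $B = \{x,z\} \cup T^*$, and $C = \{y,z\} \cup T^*$ for a fixed $T^* \subset [n] \setminus \{x,y,z\}$ of size $k-2$, such that $d(e_1,X) = d(e_2,X)$ holds for \emph{every} vertex $X \in S_{x,y,z}'$. I would emphasize this universally quantified statement explicitly, since it is precisely this ``for all $X$'' form (as opposed to merely ``$S_{x,y,z}'$ is not resolving'') that makes the conclusion robust under passing to subsets.

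Combining the two points finishes the argument. Because $S \subseteq S_{x,y,z}'$, every vertex $X \in S$ satisfies $d(e_1,X) = d(e_2,X)$, so no vertex of $S$ resolves the pair $e_1, e_2$. Hence $S$ fails to resolve at least one pair of distinct edges of $J_{n,k}$, contradicting the hypothesis that $S$ is an edge resolving set. Therefore the assumption $S \cap \overline{S_{x,y,z}'} = \emptyset$ is untenable, and $S \cap \overline{S_{x,y,z}'} \ne \emptyset$ for every choice of $x,y,z$.

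There is no genuinely hard step here; the entire corollary is a logical reformulation of Lemma \ref{noters}. The one point requiring care is the recognition that the lemma's proof yields a single witnessing pair $e_1, e_2$ that is left unresolved by \emph{all} of $S_{x,y,z}'$, so that the same pair continues to defeat any subset $S \subseteq S_{x,y,z}'$. Making that quantifier explicit is the only subtlety, and it is what legitimizes the contrapositive from a statement about $S_{x,y,z}'$ itself to a statement about all of its subsets.
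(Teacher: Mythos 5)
Your proof is correct and follows essentially the same route as the paper, which presents the corollary as an immediate consequence of Lemma \ref{noters} via exactly your contrapositive: $S \cap \overline{S_{x,y,z}'} = \emptyset$ means $S \subseteq S_{x,y,z}'$, and a subset of a non-resolving set cannot be edge resolving (the paper uses this same monotonicity step explicitly in Step 1 of Theorem \ref{k2}). One small remark: you did not actually need to reopen the lemma's proof to extract the witnessing pair $e_1, e_2$ --- the bare statement that $S_{x,y,z}'$ is not an edge resolving set already yields, by the definition of an edge resolving set, a pair of distinct edges left unresolved by \emph{every} vertex of $S_{x,y,z}'$, and hence by every vertex of any subset, so the quantifier you were careful to make explicit comes for free.
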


This corollary represents a necessary condition for set $S$ to be an edge resolving set of $J_{n,k}$.
Moreover, according to Corollary 1, each edge resolving set set of graph $J_{n,k}$ represents a
hitting set of family $F=\{\overline{S_{x,y,z}'} | x,y,z \in [n], x \ne y \ne z \ne x\}$.

Therefore, if $MHSP(F_{J_{n,k}})$ is the minimal cardinality
of hitting sets of family $F$ for graph $J_{n,k}$ the next Corollary holds:

\begin{cor} \label{elb3}
$\beta_E(J_{n,k}) \ge MHSP(F_{J_{n,k}})$.
\end{cor}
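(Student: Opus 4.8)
The plan is to recognize this corollary as an immediate consequence of Corollary \ref{ers} together with the definition of the minimal hitting set. First I would take an arbitrary edge metric basis $S^*$ of $J_{n,k}$, that is, an edge resolving set of minimum cardinality $|S^*| = \beta_E(J_{n,k})$. The goal is to show that $S^*$ is a hitting set of the family $F = \{\overline{S_{x,y,z}'} \mid x,y,z \in [n], x \ne y \ne z \ne x\}$, from which the bound follows directly.

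The central step invokes Corollary \ref{ers}: since $S^*$ is an edge resolving set, for every triple of mutually distinct $x,y,z \in [n]$ we have $S^* \cap \overline{S_{x,y,z}'} \ne \emptyset$. This is precisely the condition of Definition \ref{hs1} that $S^*$ meets every member of $F$, so $S^*$ is a hitting set of $F$. Before applying Definition \ref{hs1} verbatim, I would check that the ground-set requirement $\bigcup_{x,y,z} \overline{S_{x,y,z}'} = V(J_{n,k})$ is satisfied: given any $k$-subset $A$, choosing two of its elements as $z$ and $x$, setting $T = A \setminus \{x,z\}$, and picking any $y \in [n] \setminus A$ (which exists since $n > k$) yields $A = \{x,z\} \cup T \in \overline{S_{x,y,z}'}$. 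Thus $F$ covers all vertices and the hitting-set terminology applies literally.

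Finally, because $MHSP(F_{J_{n,k}})$ is by definition the minimum cardinality among all hitting sets of $F$, and $S^*$ is one such hitting set, we obtain $\beta_E(J_{n,k}) = |S^*| \ge MHSP(F_{J_{n,k}})$, which is the desired inequality.

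I do not anticipate a genuine obstacle, as the argument is a short logical chain: \emph{edge resolving set} $\Rightarrow$ \emph{hitting set of} $F$ $\Rightarrow$ \emph{cardinality bounded below by} $MHSP(F_{J_{n,k}})$. The only point requiring a moment of care is the verification that every vertex appears in some $\overline{S_{x,y,z}'}$, which ensures Definition \ref{hs1} is applicable without modification; this is routine given $n > k$.
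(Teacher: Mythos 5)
Your proof is correct and follows the same route as the paper, which derives the corollary immediately from Corollary~\ref{ers} by observing that every edge resolving set of $J_{n,k}$ is a hitting set of the family $F$, so the minimum over edge resolving sets is bounded below by $MHSP(F_{J_{n,k}})$. Your extra verification that $\bigcup_{x,y,z}\overline{S_{x,y,z}'}=V(J_{n,k})$ (so that Definition~\ref{hs1} applies verbatim for $k\ge 2$, $n>k$) is a small point of care the paper leaves implicit, but it does not change the argument.
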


\subsection{Edge and mixed metric dimensions of $J_{n,2}$}

The next definition establishes a connection between arbitrary item $a \in V(G) \cup E(G)$
and set $S \subseteq V(G)$.

\begin{dfn} For any graph $G$ and any set $S \subseteq V(G)$, and item $a \in V(G) \bigcup E(G)$ let
$ZE(a,S) = \{ u \in S | d(a,u) = 0 \}$ and $ze(a,S)= | ZE(a,S)|$.
\end{dfn}

Obviously, if $ze(a,S) \ne ze(b,S)$ then items $a$ and $b$ have different metric coordinates with respect to $S$.
Additionally, for any vertex $u$ it holds $ze(u,S) \in \{0,1\}$ and
for any edge $e$ it holds $ze(e,S) \in \{0,1,2\}$.

The next theorem derives exact values of $\beta_E(J_{n,2})$ and $\beta_M(J_{n,2})$ for $n \ge 5$.

\begin{thm} \label{k2} For $n \ge 5$ it holds $\beta_E(J_{n,2}) =  \beta_M(J_{n,2}) = \binom{n}{2} - \lfloor \frac{n}{2} \rfloor$.
\end{thm}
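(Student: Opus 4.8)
The plan is to sandwich both invariants between the same two bounds. For the lower bound I would specialize Corollary~\ref{ers} to $k=2$, where $T$ ranges over the single empty set, so that $\overline{S_{x,y,z}'}=\{\{x,z\},\{y,z\}\}$. Thus every edge resolving set $S$ must contain $\{x,z\}$ or $\{y,z\}$ for each triple of distinct $x,y,z$. Reading this with $z$ fixed, it says that for no two distinct $u,v\in[n]\setminus\{z\}$ can both $\{u,z\}$ and $\{v,z\}$ lie in $\overline{S}=V(J_{n,2})\setminus S$; equivalently, $\overline{S}$ contains at most one $2$-subset through each element $z$. Hence $\overline{S}$, viewed as a set of edges of $K_n$, is a matching, so $|\overline{S}|\le\lfloor n/2\rfloor$ and $\beta_E(J_{n,2})\ge\binom{n}{2}-\lfloor n/2\rfloor$. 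By Proposition~\ref{mlb1} the same bound holds for $\beta_M(J_{n,2})$.

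For the upper bound I would fix a maximum matching $M$ of $K_n$ (of size $\lfloor n/2\rfloor$), set $S=V(J_{n,2})\setminus M$, and prove that $S$ is already a \emph{mixed} resolving set; together with $\beta_E\le\beta_M\le|S|$ this closes the gap and forces $\beta_E=\beta_M$. Using (\ref{distj}) I would first record the distance patterns: for vertices $d(u,w)=2-|u\cap w|$, while for an edge $e$ with endpoints $A,B$ (so $A\cup B$ is a $3$-set with a distinguished common element) one has $d(e,w)=0$ iff $w\in\{A,B\}$, $d(e,w)=2$ iff $w$ is disjoint from $A\cup B$, and $d(e,w)=1$ otherwise. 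Then I would invoke $ze(\cdot,S)$: since $M$ is a matching, no edge has both endpoints in $M$, so every edge has $ze(e,S)\in\{1,2\}$, whereas a vertex has $ze(\cdot,S)\in\{0,1\}$, equal to $0$ exactly on the vertices of $M$. Any two items with different $ze$ values are resolved, which separates all of $M$ from everything else and disposes of most vertex/edge mismatches.

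It remains to separate pairs with a common $ze$ value, and here the main obstacle is the edge--edge case. In each case the strategy is to exhibit a resolver and argue it lies in $S$ via the matching property that through any element at most one $2$-subset is missing. For two vertices a single ``free'' element (available since $n\ge5$) gives a resolver. For a vertex $v$ and an edge $e=AB$ sharing its $ze$ value one must have $ze=1$, so one endpoint, say $B$, lies in $S$; then $w=B$ resolves unless $v=B$, in which case a vertex of $W_{AB}$ lying in $S$ (which exists by Definition~\ref{wset} and the matching property) resolves them. For two edges I would split on their underlying $3$-sets: if these differ, or the edges share no endpoint, a suitable $S$-endpoint or a vertex meeting exactly one of the two $3$-sets resolves them. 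The genuinely tight subcase is two edges sharing one endpoint but with different centers and different $3$-sets, where both ``distinguishing'' endpoints may lie in $M$; there I would take the private element $s$ of one $3$-set together with a spare element $t$ outside the union of the two $3$-sets (which exists precisely because $n\ge5$), so that $\{s,t\}$ is disjoint from one $3$-set but meets the other, and the matching property guarantees $\{s,t\}\in S$. Checking that this choice always lands in $S$ across every such configuration is the delicate point; once it is verified, $S$ is a mixed resolving set and the chain $\binom{n}{2}-\lfloor n/2\rfloor\le\beta_E(J_{n,2})\le\beta_M(J_{n,2})\le|S|=\binom{n}{2}-\lfloor n/2\rfloor$ yields the theorem.
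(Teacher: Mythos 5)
Your proposal is correct and takes essentially the same route as the paper: the lower bound is the paper's Step~1 (Lemma~\ref{noters} via Corollary~\ref{ers}, rephrased as ``the complement of an edge resolving set is a matching in $K_n$'' instead of the paper's pigeonhole contradiction), and the upper bound uses the identical witness $S=V(J_{n,2})\setminus M$ (the paper's missing pairs $\{2i-1,2i\}$ form a maximum matching) together with the same $ze$-based case decomposition and the same ``private element plus spare element'' resolvers, whose membership in $S$ follows from the matching property exactly as in the paper's Cases~2, 4 and~5. The one hand-wavy spot in your sketch --- existence of a vertex of $W_{AB}\cap S$ when $v=B$ --- is filled by precisely that construction ($\{p,\overline{q}\}$ with $\overline{q}\in A\setminus B$ and $p\notin A\cup B$, available since $n\ge 5$), which is the paper's Case~5.
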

\begin{proof}
Let $m = \lfloor \frac{n}{2} \rfloor$. \\
\textbf{\underline{Step 1}:}  $\beta_E(J_{n,2}) \ge \binom{n}{2} - m$ \\
Suppose the opposite, that $\beta_E(J_{n,2}) < \binom{n}{2} - m$.
Then there exists edge resolving set $S$, for which it holds $|S| < \binom{n}{2} - m$.
Since $|V(J_{n,2})| = \binom{n}{2}$, then $|V(J_{n,2}) \setminus S| > m$.
As both sides of this inequality are integer numbers it follows that $|V(J_{n,2}) \setminus S| \ge m+1$.
Since members of $V(J_{n,2}) \setminus S$ are pairs from set $\{1,2,...,n\}$ and their number is at least $m+1$,
then at least one element from $\{1,2,...,n\}$ has to repeat in at least two pairs from $V(J_{n,2}) \setminus S$,
i.e. $(\exists x)(\exists y)(\exists z) \{x,y\},\{x,z\} \in V(J_{n,2}) \setminus S$.
This fact implies

$$S \subseteq V(J_{n,2}) \setminus \{ \{x,y\},\{x,z\} \} = S_{x,y,z}'$$

By Lemma \ref{noters}, $S_{x,y,z}'$ is not an edge resolving set of $J_{n,2}$,
and consequently the same holds for its subset $S$, which is in contradiction with
the starting assumption. Therefore,
$\beta_E(J_{n,2}) \ge \binom{n}{2} - \lfloor \frac{n}{2} \rfloor$. \\

\textbf{\underline{Step 2}:}  $\beta_M(J_{n,2}) \le \binom{n}{2} - m$\\
Let us consider set $S = V(J_{n,2}) \setminus \{ \{2i-1,2i\} | 1 \le i \le m\}$
for which $|S| = \binom{n}{2} - m$.
We will prove that $S$ is a mixed resolving set for $J_{n,2}$, i.e. that any item
$a$ from $V(J_{n,2}) \bigcup E(J_{n,2})$ has unique metric coordinates with respect to $S$.
There are five cases for item $a$ depending on $a \in V(J_{n,2})$ or $a \in E(J_{n,2})$ and on value
$ze(a,S)$.

\textit{\underline{Case 1}:}  $a \in E(J_{n,2})$ and $ze(a,S) = 0$\\
Let $a = XY \in E(J_{n,2})$. As $ze(a,S) = 0$ it follows that $X \notin S$, so
$X$ can be represented as $X = \{2i-1,2i\}$ for some $i$ such that $1 \le i \le m$.
Also it follows that $Y \notin S$, so
$Y$ can be represented as $Y = \{2j-1,2j\}$ for some $j$ such that $j \ne i$ and $1 \le j \le m$.
Since $X \bigcap Y = \{2i-1,2i\} \bigcap \{2j-1,2j\} = \emptyset $
then $XY \notin E(J_{n,2})$ which is in contradiction with $a = XY \in E(J_{n,2})$.
Therefore, for any $a \in E(J_{n,2})$ it follows that $ze(a,S) > 0$.
Hence, Case 1 actually does not exist.

\textit{\underline{Case 2}:}  $a \in E(J_{n,2})$ and $ze(a,S) = 1$\\
Let $a = XY \in E(J_{n,2})$. As $ze(a,S) = 1$ without loss of generality we can assume that $X \in S$
and $Y \notin S$. Since $Y \notin S$ then $Y$ can be represented as $Y = \{2j-1,2j\}$
for some $j$ such that $1 \le j \le m$.
Let $X=\{q,r\}$. As $XY$ is an edge, it holds that $2j-1 \in X$ or $2j \in X$ (not both)
so $q+r \ne 4j-1$.
Without loss of generality we can assume that $q \in \{2j-1,2j\}$.
Let $\overline{q} = 4j-1-q$. Then $\overline{q} \ne q$
and $\overline{q} \in \{2j-1,2j\}$.
As $q+r \ne 4j-1$ and $q+\overline{q} = 4j-1$ it follows that
$\overline{q} \ne r$.
Therefore, $\overline{q} \notin X$.

Since $d(a,X)=0$, we should only compare metric coordinates of $XY$ with metric coordinates of any item
$b$ such that $ZE(b,S) = \{X\}$. This is satisfied when:
\begin{itemize}
\item vertex $b=X$, or
\item edge $b=XZ$, for any vertex $Z \notin S$.
\end{itemize}

Since $Z \notin S$, then $Z$ can be represented as $Z = \{2l-1,2l\}$, $l \ne j$.
It is clear that $YZ \notin E(J_{n,2})$ since $Y \bigcap Z = \emptyset$.

Since $XZ \in E(J_{n,2})$ it follows that $r \in \{2l-1,2l\}$.
Let  $\overline{r}=4l-1-r$ then $\overline{r} \in \{2l-1,2l\}$.
As $q \in \{2j-1,2j\}$ and $j \ne l$ then $\overline{r} \ne q$.
Since $\overline{r} \ne r$ and $\overline{r} \ne q$ then $\overline{r} \notin X$.

Since $n \ge 5$ we can choose arbitrary index $p$ which is not in set
$\{2l-1,2l,2j-1,2j\}$. Let
$T_1=\{p,\overline{q}\}$ and $T_2=\{p,\overline{r}\}$.

From $\overline{q} \in \{2j-1,2j\}$, $\overline{r} \in \{2l-1,2l\}$
and $p \notin \{2l-1,2l,2j-1,2j\}$ it follows that
$T_1, T_2 \in S$.

Let us compare metric coordinates of edge $a=XY$ and vertex $b=X$
with respect to $T_1$:

$d(X,T_1) = d(\{q,r\},\{p,\overline{q}\}) = 2$
and $d(XY,T_1) = 1$, since \\
$d(Y,T_1)=d(\{2j-1,2j\},\{p,\overline{q}\})=1$ because $\overline{q} \in \{2j-1,2j\}$.

Let us compare metric coordinates of edge $a=XY$ and vertex $b=XZ$
with respect to $T_2$:

$d(X,T_2) = d(\{q,r\}, \{p,\overline{r}\}) = 2$ since $p, \overline{r} \notin X$. \\
$d(Y,T_2) = d(\{2j-1,2j\}, \{p,\overline{r}\}) = 2$ since $p, \overline{r} \notin \{2j-1,2j\}$. \\
$d(Z,T_2) = d(\{2l-1,2l\}, \{p,\overline{r}\}) = 1$ since $\overline{r} \in \{2l-1,2l\}$. \\
Therefore, $2 = d(XY,T_2) \ne d(XZ,T_2) = 1$.

Hence, edge $XY$ has different metric coordinates with respect to $S$
compared to edge $XZ$ and vertex $X$,
so $XY$ has unique metric coordinates with respect to $S$.

\textit{\underline{Case 3}:}  $a \in E(J_{n,2})$ and $ze(a,S) = 2$\\
Let $a = XY$ and $ZE(a,S)=\{X,Y\}$. Edge $a=XY$ has unique metric coordinates with
respect to $S$ compared to any vertex of $v \in V(J_{n,2})$,
since $ze(v,S) \le 1$. Also $a=XY$ has unique metric coordinates with
respect to $S$ compared to any other egde $e \in E(J_{n,2})$ since
$ZE(e,S) \ne \{X,Y\}$.

\textit{\underline{Case 4}:}  $a \in V(J_{n,2})$ and $ze(a,S) = 0$\\
Let $a = X \in V(J_{n,2})$. As $ze(a,S) = 0$ it follows that $X \notin S$ so
$X$ can be represented as $X = \{2i-1,2i\}$ for some $i$ such that $1 \le i \le m$.
We should compare metric coordinates of $X$ with metric coordinates of any other
item $b$ such that $ze(b,S) = 0$. According to Case 1 for any edge $e \in E(J_{n,2})$ it holds $ze(e,S) > 0$.
Therefore it is sufficient to compare $X$ only with vertex $b=Z$ such that $ze(b,S) = 0$.
As $Z \notin S$ then $Z=\{2j-1,2j\}$, where $j \ne i$ and $1 \le j \le m$. Let $p \notin \{2i-1,2i,2j-1,2j\}$
and $T=\{p,2i\}$. Since $p \ne 2i-1$ then $T \in S$. Since $d(X,T) = d(\{2i-1,2i\},\{p,2i\}) = 1$
and $d(Z,T)= d(\{2j-1,2j\},\{p,2i\}) = 2$ then $X$ and $Z$ have different metric coordinates with respect to $S$,
so $X$ has unique metric coordinates with respect to $S$.

\textit{\underline{Case 5}:}  $a \in V(J_{n,2})$ and $ze(a,S) = 1$\\
Let $a = X \in V(J_{n,2})$. As $ze(a,S) = 1$ it follows that $X \in S$.
Since $ZE(X,S)=\{X\}$ then vertex $X$ has unique metric coordinates with
respect to $S$ compared to any other vertex of $V(J_{n,2})$.

Therefore, it is sufficient to compare metric coordinates of $X$ with metric coordinates of any edge
$e$ such that $ZE(e,S) = \{X\}$. Let $e=XY$ such that $Y \notin S$,
so $Y = \{2j-1,2j\}$, and let $X=\{q,r\}$.
It holds $2j-1 \in X$ or $2j \in X$ (not both). Let us assume that $q \in \{2j-1,2j\}$.
Let $\overline{q} = 4j-1-q$. Then $\overline{q} \in \{2j-1,2j\}$ and $\overline{q} \notin X$.

Let $p \notin \{r,2j-1,2j\}$ and $T=\{p,\overline{q}\}$. Since $p \ne 2j-1$ and $p \ne 2j$ then $T \in S$.
Therefore $d(X,T) = d(\{q,r\},\{p,\overline{q}\}) = 2$
and $d(XY,T) = 1$ since $d(Y,T)=d(\{2j-1,2j\},\{p,\overline{q}\})=1$.
Hence, $X$ and $XY$ have different metric coordinates with respect to $S$,
so $X$ has unique metric coordinates with respect to $S$.

Since $\beta_E(G) \le \beta_M(G)$ holds for every connected graph $G$, then the proof is completed.
\end{proof}

Since Theorem \ref{k2} holds for $n \ge 5$, the only remaining case is $n=4, k=2$.
The exact values of $\beta_E(J_{4,2})$ and  $\beta_M(J_{4,2})$ are given in the next observation.

\begin{obs} \label{k2n4} It is found, by a total enumeration, that $\beta_E(J_{4,2}) = \beta_M(J_{4,2})$ = 5,
with one of mixed metric bases $S = \{ \{1,2\},  \{1,3\},  \{1,4\},  \{2,3\},  \{2,4\} \}$.
\end{obs}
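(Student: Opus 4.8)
The plan is to establish the two inequalities $\beta_E(J_{4,2}) \ge 5$ and $\beta_M(J_{4,2}) \le 5$; together with $\beta_M(G) \ge \beta_E(G)$ from Proposition \ref{mlb1} these pin both invariants, since then $5 \le \beta_E(J_{4,2}) \le \beta_M(J_{4,2}) \le 5$. First I would explain why $n=4$ must be excluded from Theorem \ref{k2}: the formula there evaluates to $\binom{4}{2} - \lfloor 4/2 \rfloor = 4$, but the matching/pigeonhole step of its Step~1 breaks down, because a $2$-element complement $V(J_{4,2}) \setminus S$ may consist of two \emph{disjoint} pairs (a perfect matching of $[4]$), so no repeated element is forced and $S$ need not sit inside any $S'_{x,y,z}$. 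I would also note that the generic bounds are too weak to reach $5$: as $J_{4,2}$ is $4$-regular, Corollary \ref{mlb3} only gives $\beta_M(J_{4,2}) \ge 1 + \lceil \log_2 5 \rceil = 4$. Hence an $n=4$-specific combinatorial argument is unavoidable, which is precisely what the total enumeration supplies; what follows is a short symmetry-reduced version of it.

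For the lower bound $\beta_E(J_{4,2}) \ge 5$ I would show that no $4$-element vertex set $S$ is an edge resolving set, splitting on the type of its $2$-element complement $\{P,Q\} = V(J_{4,2}) \setminus S$. If $P$ and $Q$ share an element then, for the appropriate labelling, $S$ equals some $S'_{x,y,z}$ (here $|S'_{x,y,z}| = 6 - 2 = 4$), and fails by Lemma \ref{noters}. The remaining case is $P,Q$ disjoint; since $S_4$ acting on $[4]$ embeds into $\mathrm{Aut}(J_{4,2})$ and is transitive on the three complementary pairs, I may take $S = \{\{1,3\},\{1,4\},\{2,3\},\{2,4\}\}$. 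Here I would exhibit, using $d(A,B) = 2 - |A \cap B|$ from (\ref{distj}), the two edges joining the vertex $\{1,3\}$ to $\{1,2\}$ and to $\{3,4\}$ respectively; with respect to the ordered set $(\{1,3\},\{1,4\},\{2,3\},\{2,4\})$ both have edge-distance vector $(0,1,1,1)$ and so are not resolved. Thus every $4$-set fails and $\beta_E(J_{4,2}) \ge 5$.

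For the upper bound $\beta_M(J_{4,2}) \le 5$ I would verify that the displayed set $S = \{\{1,2\},\{1,3\},\{1,4\},\{2,3\},\{2,4\}\}$, whose only missing vertex is $\{3,4\}$, is a mixed resolving set, reusing the $ze$-classification from the proof of Theorem \ref{k2}. The five vertices of $S$ have $ze = 1$ with pairwise distinct $ZE$-sets, the vertex $\{3,4\}$ has $ze = 0$, no edge has $ze = 0$ (that would require both endpoints equal to $\{3,4\}$), and every edge not incident to $\{3,4\}$ has $ze = 2$ with a distinct witness pair; hence the only nontrivial comparisons are between a vertex $X \in \{\{1,3\},\{1,4\},\{2,3\},\{2,4\}\}$ and the edge joining $\{3,4\}$ to $X$, which share $ZE = \{X\}$. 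The Klein four-group $\langle (1\,2),(3\,4)\rangle$ fixes $S$ and acts transitively on these four vertices, so one check suffices: for $X = \{1,3\}$ the witness $\{2,4\}$ gives $d(X,\{2,4\}) = 2$ while the edge has distance $1$, resolving the pair.

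The main obstacle is the disjoint-complement case of the lower bound. Every other part is either a direct appeal to Lemma \ref{noters} or a finite verification that symmetry collapses to a single computation; but when $P$ and $Q$ are disjoint, Lemma \ref{noters} gives no leverage, since a complement lying inside some $S'_{x,y,z}$ would force $P$ and $Q$ to share an element. One must therefore produce an explicit unresolved pair of edges by hand, and it is exactly the existence of such a pair that raises the answer from the $n \ge 5$ value $4$ to $5$. I would close by remarking that, because $\mathrm{Aut}(J_{4,2}) \cong S_4 \times \mathbb{Z}_2$ has order $48$, the symmetry reductions above shorten the argument enough to confirm the total enumeration by hand.
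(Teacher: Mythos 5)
Your proposal is correct, and it takes a genuinely different route from the paper: the paper offers no written argument at all for this observation, simply reporting the values $\beta_E(J_{4,2})=\beta_M(J_{4,2})=5$ as the outcome of a computer-assisted total enumeration over all candidate sets. You replace that computation with a short human-verifiable proof, and the two halves check out. For the lower bound, your case split on the $2$-element complement is exactly the right dichotomy: when the two missing pairs share an element $z$, the surviving set is precisely $S'_{x,y,z}$ (whose cardinality $\binom{4}{2}-2\binom{1}{0}=4$ matches), and Lemma \ref{noters} applies verbatim since its proof nowhere needs $n\ge 5$; when they are disjoint, you correctly observe that Lemma \ref{noters} is powerless, and your explicit unresolved pair is valid --- both edges $\{1,3\}\{1,2\}$ and $\{1,3\}\{3,4\}$ indeed have distance vector $(0,1,1,1)$ with respect to $(\{1,3\},\{1,4\},\{2,3\},\{2,4\})$, and the $S_4$-action is transitive on the three perfect matchings, so one check suffices. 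For the upper bound, your $ze$-classification mirrors the machinery of Theorem \ref{k2} and correctly reduces everything to the four comparisons of a vertex $X$ adjacent to $\{3,4\}$ against the edge $X\{3,4\}$, collapsed to one by the Klein four-group fixing $S$; the witness computation $d(\{1,3\},\{2,4\})=2\ne 1=d(\{1,3\}\{3,4\},\{2,4\})$ is right. What each approach buys: the paper's enumeration is trivially trustworthy for a $6$-vertex graph and also implicitly certifies that no smaller set works for either invariant, while your argument explains \emph{why} $n=4$ is exceptional --- the disjoint-complement (perfect matching) failure mode that the pigeonhole step of Theorem \ref{k2} cannot see is exactly what pushes the answer from $4$ to $5$ --- and it shows the paper's own tools (Lemma \ref{noters} plus the $ze$-bookkeeping) nearly suffice for the exceptional case, needing only one ad hoc unresolved edge pair. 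The only blemishes are cosmetic: the remark about $\mathrm{Aut}(J_{4,2})\cong S_4\times\mathbb{Z}_2$ is unnecessary to the proof, and you should state explicitly (though it is standard) that a superset of an edge resolving set is edge resolving, so ruling out all $4$-sets rules out all smaller sets too.
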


From Proposition \ref{metdkj2} and Theorem \ref{k2}, it follows that edge and mixed metric dimensions of Johnson graphs are substantially larger than the metric dimension of $J_{n,2}$.

It is interesting to examine values $\beta_E(J_{n,k})$ and  $\beta_M(J_{n,k})$ for $k \ge 3$,
which are presented in Table 1. The first four columns contain $n$, $k$, $|V(J_{n,k})|$ and $|E(J_{n,k})|$.
The next three columns contain exact values of $\beta(J_{n,k})$, $\beta_E(J_{n,k})$ and $\beta_M(J_{n,k})$.
Theoretical lower bound {\em Ed} for $\beta_E(J_{n,k})$ obtained by Theorem \ref{elb2} is presented in column eigth.
The next two columns, denoted by {\em Mi1} and {\em Mi2}, contain theoretical lower bounds for $\beta_M(J_{n,k})$ obtained by Corollary \ref{mlb3}
and Theorem \ref{mlb4}, respectively. The next two columns are obtained as relaxations of ILP formulation
of edge and mixed metric dimension problems, respectively. Last two columns, $MHSP$ and $mhs_<$ present the hitting set lower bounds of
$\beta_E(J_{n,k})$ and $\beta_M(J_{n,k})$ obtained by Corollary \ref{elb3} and Corollary \ref{mlb7},
respectively.

All computational results presented in columns 5-7 and columns 11-14 of Table 1 are obtained using the well-known integer programming formulations
and their continous relaxations \cite{hitilp,metdilp}. The computations are performed by Gurobi 11.0.1 solver
\cite{gur} on computer AMD Ryzen 5, model 5600H with 6 cores (12 logical processors) at 3.4 GHz and 8 GB of RAM memory.
The running times (in seconds) are presented in Table 2.

\begin{table}
\caption{Exact values and bounds of metric dimensions for $J_{n,k}$, $k \ge 3$} \label{johnk3}
 \small
\begin{center}
\begin{tabular}{|c|c|c|c|c|c|c|c|c|c|c|c|c|c|}
\hline
$n$ & $k$ & $|V|$ & $|E|$ & \multicolumn{3}{|c|}{Exact value} & \multicolumn{3}{|c|}{Th. LB} & \multicolumn{2}{|c|}{Rel. LB}  & \multicolumn{2}{|c|}{Hitting LB} \\
\cline{5-14}
& & & & $\beta $ & $\beta_E$ & $\beta_M$ & Ed & Mi1 & Mi2 & Ed & Mi & {\tiny $MHSP$} & $mhs_<$ \\
 \hline
6  & 3  & 20  & 90   & 4 & 8   & 8  & 5 & 5  & 4 & 4 & 4 & 6   & 4  \\
 \hline
7  & 3  & 35  & 210  & 5 & 10  & 10 & 5 & 5  & 5 & 5 & 5 & 10  & 5  \\
 \hline
8  & 3  & 56  & 420  & 5 & 14  & 14 & 5 & 5  & 6 & 6 & 6 & 12  & 6  \\
8  & 4  & 70  & 560  & 5 & 9   & 9  & 5 & 6  & 5 & 4 & 4 & 6   & 5  \\
 \hline
9  & 3  & 84  & 756  & 6 & 18  & 18 & 6 & 6  & 6 & 7 & 7 & 16  & 6  \\
9  & 4  & 126 & 1260 & 6 & 11  & 11 & 6 & 6  & 6 & 5 & 5 & 9   & 6  \\
 \hline
\end{tabular}
\end{center}
\end{table}

\begin{table}
\caption{Gurobi running times in seconds} \label{johnk3r}
 \small
\begin{center}
\begin{tabular}{|c|c|r|r|r|r|r|r|r|}
\hline
$n$ & $k$ &  \multicolumn{3}{|c|}{Exact value} & \multicolumn{2}{|c|}{Rel. LB}  & \multicolumn{2}{|c|}{Hitting LB} \\
\cline{3-9}
& &  $\beta $ & $\beta_E$ & $\beta_M$ & Ed & Mi & {\tiny $MHSP$} & $mhs_<$ \\
 \hline
6  & 3  & 0.02 & 0.13 & 0.06 & 0.03  & 0.03 & 0.01 & 0.02 \\
 \hline
7  & 3  & 0.13 & 1.46 & 0.45 & 0.16  & 0.28 & 0.06 & 0.02 \\
 \hline
8  & 3  & 0.32 & 27.6 & 19.3 & 1.18  & 1.93 & 0.16 & 0.05 \\
8  & 4  & 0.11 & 66.2 & 56.3 & 3.23  & 4.20 & 0.08 & 0.03 \\
 \hline
9  & 3  & 0.89 & 3224 &  2534 & 7.77  & 9.88 & 1.80 & 0.05 \\
9  & 4  & 1.94 & 13554 & 17308 & 37.8 & 47.5 & 0.64 & 0.09 \\
 \hline
\end{tabular}
\end{center}
\end{table}

Values in Table 1 lead us to following observations. Theoretical lower bounds
are general and can be easily calculated, but
the obtained bounds are far from exact values. The lower bounds based
on LP relaxations and $mhs_<$ are similar to theoretical bounds.
From Table 1, it is evident that the best lower bound is $MHSP(F_{J_{n,k}})$,
which is tight, as $\beta_E(J_{7,3}) = \beta_M(J_{7,3}) = 10 = MHSP(F_{J_{7,3}})$.

From Table 2, it can be seen that the running times for obtaining
exact values $\beta_E(J_{n,k})$ and $\beta_M(J_{n,k})$ are
much larger than the running time for $\beta(J_{n,k})$.
The running times for computing $MHSP(F_{J_{n,k}})$ lower bounds are
several orders of magnitude lower than the running times for
computing the exact values $\beta_E(J_{n,k})$. Although
the problem of finding $MHSP(F_{J_{n,k}})$ is also NP-hard,
Table 1 and Table 2 indicate that $MHSP(F_{J_{n,k}})$ is a good compromise
between bound's quality and the running time.

\begin{obs} \label{n11k5} In the case of $J_{11,5}$, it is calculated by Gurobi 11.0.1 solver that
$MHSP(F_{J_{11,5}})=10$ for 34.19 seconds. Gurobi 11.0.1 solver could not obtain
the exact value of $\beta_M(J_{11,5})$ due to "out of memory" status. An upper bound for $\beta_M(J_{11,5})$
equal to 10 is obtained by a VNS metaheuristic.
Since $MHSP(F_{J_{11,5}})=10$ and $MHSP(F_{J_{n,k}}) \le \beta_E(J_{n,k}) \le \beta_M(J_{n,k})$
it follows that $\beta_E(J_{11,5})=\beta_E(J_{11,5})=10$.
\end{obs}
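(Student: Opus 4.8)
The plan is to establish the two equalities not by computing $\beta_E$ or $\beta_M$ directly (which is precisely what runs out of memory), but by sandwiching both invariants between a common lower bound and a common upper bound that coincide at the value $10$. The backbone is the inequality chain
\[
MHSP(F_{J_{n,k}}) \le \beta_E(J_{n,k}) \le \beta_M(J_{n,k}),
\]
whose left inequality is Corollary \ref{elb3} and whose right inequality $\beta_E(G) \le \beta_M(G)$ holds for every connected graph. Specializing to $n=11$, $k=5$, it therefore suffices to prove that the leftmost quantity is at least $10$ and the rightmost quantity is at most $10$.

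For the lower bound, I would cast the minimal hitting set problem for the family $F=\{\overline{S_{x,y,z}'} \mid x,y,z \in [11],\ x \ne y \ne z \ne x\}$ over the ground set $V(J_{11,5})$ as a $0$--$1$ integer linear program: one binary variable $x_v$ per vertex $v$ (here $\binom{11}{5}=462$ variables), one covering constraint $\sum_{v \in \overline{S_{x,y,z}'}} x_v \ge 1$ per index triple (with $x,y$ interchangeable, since $S_{x,y,z}' = S_{y,x,z}'$), and objective $\min \sum_v x_v$. Each set $\overline{S_{x,y,z}'}$ has cardinality $2\binom{8}{3}=112$, obtained directly from Definition \ref{sprim}. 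Solving this ILP to proven optimality by Gurobi certifies $MHSP(F_{J_{11,5}})=10$, whence $\beta_E(J_{11,5}) \ge 10$ by Corollary \ref{elb3}. The reported running time of about $34$ seconds shows that the hitting-set relaxation is tractable even though the direct dimension ILP is not.

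For the matching upper bound, I would exhibit an explicit set $S \subseteq V(J_{11,5})$ with $|S|=10$ and verify that it is a mixed resolving set, i.e.\ that every two distinct items of $V(J_{11,5}) \cup E(J_{11,5})$ receive distinct distance vectors to $S$. Such a candidate $S$ is produced by the VNS metaheuristic; its verification is a purely deterministic check that computes, for each vertex and each edge, its distance profile to $S$ via formula (\ref{distj}) and confirms that all these profiles are pairwise distinct. This yields $\beta_M(J_{11,5}) \le 10$. Combining the two bounds gives $10 \le \beta_E(J_{11,5}) \le \beta_M(J_{11,5}) \le 10$, forcing $\beta_E(J_{11,5})=\beta_M(J_{11,5})=10$.

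The main obstacle is genuinely computational rather than logical: neither $\beta_E$ nor $\beta_M$ can be obtained by solving its own integer program within the available memory, so the entire argument's leverage comes from the fortunate coincidence that the \emph{cheaper} hitting-set lower bound meets the metaheuristic upper bound. Two independent computations thereby certify one another, and the soundness of the conclusion rests only on (i) the exactness of the Gurobi optimum for the hitting-set ILP and (ii) the correctness of the deterministic resolvability check on the size-$10$ candidate set, neither of which requires the intractable direct optimization.
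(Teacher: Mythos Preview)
Your argument is correct and follows exactly the route the paper takes: the observation itself already contains the sandwich reasoning $MHSP(F_{J_{11,5}}) \le \beta_E(J_{11,5}) \le \beta_M(J_{11,5}) \le 10$, with the lower end certified by the Gurobi hitting-set computation (Corollary~\ref{elb3}) and the upper end by the VNS-produced mixed resolving set. Your proposal simply fleshes out the ILP formulation and the deterministic verification step more explicitly than the paper does, but the logical structure is identical.
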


Theorem 1 and Table 1 lead us to the following Hypothesis.

\begin{hyp} \label{hy1}For $k \ge 3$ it holds $\beta_E(J_{n,k}) = \beta_M(J_{n,k})$.
\end{hyp}

\section{Conclusions}

In this paper, both edge and mixed metric dimensions of Johnson graphs $J_{n,k}$ are considered.
Exact value for $\beta_E(J_{n,2})$ and $\beta_M(J_{n,2})$ equal to $\binom{n}{2} - \lfloor \frac{n}{2} \rfloor$
are found for all $n \ge 5$, while for $n=4,k=2$ both values are equal to 5.

Further work can be directed to finding edge and mixed metric dimensions of Kneser graphs, as well as, of some other interesting classes of graphs.

\bibliographystyle{elsarticle-num}

\section*{Acknoledgements}
\end{document}